\title{On $p$-adic entropy of some solenoid dynamical systems}
\date{}
\author{Yu Katagiri}
\theoremstyle{definition}
\newtheorem{theorem}{Theorem}[section]
\newtheorem*{theorem*}{Theorem}
\newtheorem{definition}[theorem]{Definition}
\newtheorem*{definition*}{Definition}
\newtheorem{lemma}[theorem]{Lemma}
\newtheorem*{lemma*}{Lemma}
\newtheorem*{proposition*}{Proposition}
\newtheorem{example}[theorem]{Example}
\newtheorem*{example*}{Example}
\newtheorem{remark}[theorem]{Remark}
\newtheorem*{remark*}{Remark}
\newtheorem{corollary}[theorem]{Corollary}
\newtheorem{corollary*}{Corollary}
\begin{document}
\maketitle

\section{Introduction}

Let $d$ be a positive integer. For a $\mathbb{Z}^d$-action on a set $X$, the periodic entropy of the $\mathbb{Z}^d$-action on $X$ is defined by
\begin{align*}
h(X) \coloneqq \displaystyle \lim_{n \to \infty} \frac{1}{[\mathbb{Z}^d \colon (n \mathbb{Z})^d]} \log |{\rm Fix}_{(n \mathbb{Z})^d} (X)|
\end{align*}
if the limit exists. Here ${\rm Fix}_{(n \mathbb{Z})^d} (X)$ is the set of fixed points of the $(n\mathbb{Z})^d$-action on $X$. If $X$ is a compact metrizable abelian group and the $\mathbb{Z}^d$-action on $X$ is continuous, one can define other entropies, topological entropy and measure theoretic entropy (with respect to the normalized Haar measure on $X$). See \cite[Chapter V]{Sc95} and \cite[Appendix A]{LSW90} for the definition of these entropies.\\
\\
We consider the action of $\mathbb{Z}^d$ on a discrete abelian group $L_d(\mathbb{Z}) \coloneqq \mathbb{Z}[t_1^{\pm 1},\dots,t_d^{\pm 1}]$ given by
\begin{align}\label{actionX_f}
\delta \cdot \displaystyle \sum_{\nu \in \mathbb{Z}^d} a_\nu t^\nu = \displaystyle \sum_{\nu \in \mathbb{Z}^d} a_\nu t^{\nu + \delta}
\end{align}
for $\delta \in \mathbb{Z}^d$ and $\sum_{\nu \in \mathbb{Z}^d} a_\nu t^\nu \in L_d(\mathbb{Z})$. Here, for $t=(t_1, \cdots , t_d)$ and $\nu =(\nu_1, \cdots ,\nu_d)$, define $t^\nu=t_1^{\nu_1}\dots t_d^{\nu_d}$. Then the Pontryagin dual $\widehat{L_d(\mathbb{Z})}$ is a compact abelian group and the action on $L_d(\mathbb{Z})$ induces the action on $\widehat{L_d(\mathbb{Z})}$. For a fixed $f \in L_d(\mathbb{Z})$, the above action induces the $\mathbb{Z}^d$-actions on $L_d(\mathbb{Z})/{fL_d(\mathbb{Z})}$ and its Pontryagin dual $X_f \coloneqq (L_d(\mathbb{Z})/{fL_d(\mathbb{Z})})^\wedge$. In \cite[Theorem 3.1]{LSW90}, Lind, Schmidt and Ward showed that the (topological) entropy of $X_f$ is given by
\begin{align*}
h(X_f)=m(f)
\end{align*}
for $0 \neq f \in L_d(\mathbb{Z})$. Here $m(f)$ is the Mahler measure of $f$, which is defined by
\begin{align*}
 m(f) \coloneqq& \frac{1}{(2 \pi \sqrt{-1})^d} \int_{T^d} \log |f(z_1, \dots ,z_d)| \frac{dz_1}{z_1} \cdots \frac{dz_d}{z_d} \in \mathbb{R},
\end{align*}
and
\begin{align}\label{torus}
T^d=\{(z_1, \dots ,z_d) \in \mathbb{C}^d \mid |z_1|= \dots =|z_d|=1\}
\end{align}
is the $d$-torus. Note that the periodic entropy of $X_f$ exists and coincides with the topological entropy of $X_f$ if and only if $f$ does not vanish on $T^d$ (\cite{D12}, \cite{LSW90}).

Let $K$ be a number field with the ring $\mathcal{O}_K$ of the integers and $0 \neq f \in L_d(\mathcal{O}_K) \coloneqq \mathcal{O}_K[t_1^{\pm 1},\dots,t_d^{\pm 1}]$. In \cite{E99}, Einsiedler extended this theorem to the case $X_f \coloneqq (L_d(\mathcal{O}_K)/{fL_d(\mathcal{O}_K)})^\wedge$ and proved that
\begin{align*}
h(X_f)=m(N_{K/\mathbb{Q}}(f))
\end{align*}
holds. Here
\begin{align*}
 N_{K/\mathbb{Q}}(f)= \displaystyle \prod_{\tau : K \hookrightarrow \mathbb{C}} \tau (f) \in L_d(\mathbb{Z}).
\end{align*}
\\
\\
Let $p$ be a prime, $\mathbb{C}_p$ be the completion of $\overline{\mathbb{Q}}_p$ with $|p|_p=p^{-1}$ and $\log_p$ be the $p$-adic logarithm with $\log_p p=0$. In \cite{D09}, Deninger introduced the $p$-adic entropy as a $p$-adic analogue of (periodic) entropy. A simple definition of the $p$-adic entropy is the following (see \cite{D09} for the detailed definition).

\begin{definition}\label{def of p-adic entropy}
Assume that $\mathbb{Z}^d$ acts on a set $X$. If the limit
\begin{align*}
h_p(X) \coloneqq \displaystyle \lim_{\substack{n \to \infty \\ (n,p)=1}} \frac{1}{[\mathbb{Z}^d \colon (n \mathbb{Z})^d]} \log_p |{\rm Fix}_{(n \mathbb{Z})^d} (X)|
\end{align*}
exists, we call $h_p(X)$ the $p$-adic entropy.
\end{definition}

\begin{definition}
Let $f \in \mathbb{C}_p[t_1^{\pm 1}, \dots ,t_d^{\pm 1}]$. If the limit
\begin{align}\label{mahler}
m_p(f)= \displaystyle \lim_{\substack {N \to \infty \\ (N,p)=1}} \frac{1}{N^d} \displaystyle \sum_{\zeta \in \mu _N^d} \log_p f(\zeta)
\end{align}
exists, we call $m_p(f)$ the $p$-adic Mahler measure of $f$. Here
\begin{align*}
\mu_N&=\{z \in \mathbb{C}_p \mid z^N=1\}.
\end{align*}
Note that the R.H.S. of (\ref{mahler}) is a $p$-adic analogue of the line integral on the $p$-adic $d$-torus
\begin{align}\label{p-adic torus}
T_p^d&=\{(z_1, \dots ,z_d) \in \mathbb{C}_p^d \mid |z_1|_p= \dots =|z_d|_p=1\}.
\end{align}
See \cite{BD99} for details.
\end{definition}

Deninger proved Theorem \ref{thmD} as a $p$-adic analogue of Lind-Schmidt-Ward's theorem.

\begin{theorem}\cite[Theorem 1.1]{D09}\label{thmD}
Let $f \in L_d(\mathbb{Z})$ and assume that $f$ does not vanish at any $z \in T_p^d$. Then the $p$-adic entropy $h_p(X_f)$ of the $\mathbb{Z}^d$-action on $X_f$ exists and we have
\begin{align*}
h_p(X_f)=m_p(f).
\end{align*}
\end{theorem}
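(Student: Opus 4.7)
The plan is to mirror the classical Lind--Schmidt--Ward argument in the $p$-adic setting, reducing the computation of $h_p(X_f)$ to that of the $p$-adic Mahler measure $m_p(f)$ via an identity valid term-by-term before the limit is taken.

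First, Pontryagin duality identifies the fixed-point set of the $(n\mathbb{Z})^d$-action on $X_f$ with the dual of the coinvariant module, so
\[
|{\rm Fix}_{(n\mathbb{Z})^d}(X_f)| \;=\; |M_n|,\qquad M_n \coloneqq L_d(\mathbb{Z})/(f,\,t_1^n-1,\dots,t_d^n-1).
\]
Because $n$-th roots of unity of order coprime to $p$ are $p$-adic units, $\mu_n^d \subset T_p^d$ whenever $(n,p)=1$, and the non-vanishing hypothesis on $T_p^d$ forces $f(\zeta)\neq 0$ for every $\zeta \in \mu_n^d$. Hence $M_n$ is finite for all such $n$.

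Second, I would obtain an exact formula relating $|M_n|$ to the values of $f$ on $\mu_n^d$. The quotient $R \coloneqq \mathbb{Z}[t_1^{\pm 1},\dots,t_d^{\pm 1}]/(t_1^n-1,\dots,t_d^n-1)$ is a free $\mathbb{Z}$-module of rank $n^d$; after extending scalars to $\mathbb{C}$, the iterated Chinese Remainder decomposition $R \otimes \mathbb{C} \cong \prod_{\zeta \in \mu_n^d} \mathbb{C}$ (via $t_i \mapsto \zeta_i$) diagonalizes multiplication by $f$ with eigenvalues $(f(\zeta))_\zeta$. Consequently $|M_n|$ equals the absolute value of the determinant of the $\mathbb{Z}$-linear endomorphism ``multiplication by $f$'' on $R$, which in turn equals $\bigl|\prod_{\zeta \in \mu_n^d} f(\zeta)\bigr|$. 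Thus $|M_n| = \pm \prod_{\zeta \in \mu_n^d} f(\zeta)$ as rational integers. Applying $\log_p$ and using $\log_p(\pm 1)=0$, I obtain the key term-by-term identity
\[
\frac{1}{n^d}\log_p |M_n| \;=\; \frac{1}{n^d}\sum_{\zeta \in \mu_n^d} \log_p f(\zeta)
\]
in $\mathbb{C}_p$ for every $n$ with $(n,p)=1$. Once this identity is established, existence of the limit on the left becomes equivalent to existence of the limit on the right, and the equality $h_p(X_f)=m_p(f)$ is automatic.

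The main obstacle is therefore to establish the existence of the limit defining $m_p(f)$ under the hypothesis that $f$ does not vanish on $T_p^d$. This is the analytic heart of the matter: non-vanishing on $T_p^d$ makes $\log_p f$ a well-defined locally $p$-adic analytic function on the compact polyannulus $T_p^d$, so one expects the Riemann sums $n^{-d}\sum_\zeta \log_p f(\zeta)$ to converge to its $p$-adic ``integral'' in the sense of \cite{BD99}. I would follow that approach: factor $f$ on $T_p^d$ into a unit of the Tate algebra times elementary polynomial-like factors via a $p$-adic Weierstrass preparation, then handle each piece by expanding $\log_p(1-x)$ as a convergent power series for $|x|_p<1$ and summing over $\mu_n^d$ termwise using the orthogonality relation $\sum_{\zeta \in \mu_n} \zeta^k = n\cdot \mathbf{1}_{n \mid k}$. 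Controlling the uniformity of this convergence, and in particular the behaviour of the Weierstrass factors as $n$ varies through integers coprime to $p$, is the delicate step I expect to be the technical heart of the proof.
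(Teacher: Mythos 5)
This statement is not proved in the paper at all: it is quoted verbatim from Deninger as \cite[Theorem 1.1]{D09}, so there is no internal proof to compare against, and I can only assess your outline on its own terms. Your first half is correct and is the standard reduction: Pontryagin duality gives $|{\rm Fix}_{(n\mathbb{Z})^d}(X_f)|=|M_n|$, the module $M_n$ is the cokernel of multiplication by $f$ on the free $\mathbb{Z}$-module $\mathbb{Z}[t_1^{\pm1},\dots,t_d^{\pm1}]/(t_1^n-1,\dots,t_d^n-1)$ of rank $n^d$, its order is $\bigl|\prod_{\zeta\in\mu_n^d}f(\zeta)\bigr|$, and since $\log_p(-1)=0$ the term-by-term identity $\frac{1}{n^d}\log_p|M_n|=\frac{1}{n^d}\sum_{\zeta}\log_p f(\zeta)$ follows; the finiteness of $M_n$ for $(n,p)=1$ does come from $\mu_n^d\subset T_p^d$ as you say. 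The second half --- existence of the limit $m_p(f)$ --- is the real analytic content and you only sketch it, but the sketch points at the right mechanism: a Laurent polynomial with no zeros on $T_p^d$ is a unit $c\,t^{\nu}(1+h)$ with $\|h\|<1$ in the Tate algebra of the torus, roots of unity are killed by $\log_p$, and the series for $\log_p(1+h)$ plus the orthogonality relation $\sum_{\zeta\in\mu_n}\zeta^k=n\cdot\mathbf{1}_{n\mid k}$ give convergence; this is exactly what \cite{BD99} supplies. For completeness, note that Deninger's own proof in \cite{D09} organizes this second half differently, passing through the Banach algebra $c_0(\mathbb{Z}^d)$ (whose units are detected by non-vanishing on $T_p^d$) and a $p$-adic Fuglede--Kadison determinant, and then identifying that determinant with $m_p(f)$ via \cite{BD99}; your more direct Riemann-sum route and his are equivalent in substance, so there is no gap in the strategy, only an unexpanded (but correctly attributed) technical step.
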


The first aim in this paper is to prove the following theorem as a $p$-adic analogue of Einsiedler's theorem. 

\begin{theorem}\label{thmK1}
Let $K$ be a number field and $\mathcal{O}_K$ be the ring of the integers of $K$. Assume that $f \in L_d(\mathcal{O}_K)$ and $N_{K/\mathbb{Q}}(f)$ does not vanish at any point of the $p$-adic torus $T_p^d$. Then the $p$-adic entropy $h_p(X_f)$ of the $\mathbb{Z}^d$-action on $X_f=(L_d(\mathcal{O}_K)/{fL_d(\mathcal{O}_K)})^\wedge$ exists and we have
\begin{align*}
h_p(X_f)=m_p(N_{K/\mathbb{Q}}(f)).
\end{align*}
Here, the action on $X_f$ is induced by the action on $L_d(\mathcal{O}_K)$ as (\ref{actionX_f}).
\end{theorem}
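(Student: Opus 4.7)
The strategy is to reduce Theorem \ref{thmK1} to Theorem \ref{thmD} applied to $N_{K/\mathbb{Q}}(f) \in L_d(\mathbb{Z})$ by establishing the pointwise equality $|\mathrm{Fix}_{(n\mathbb{Z})^d}(X_f)| = |\mathrm{Fix}_{(n\mathbb{Z})^d}(X_{N_{K/\mathbb{Q}}(f)})|$ for every integer $n$ with $(n,p)=1$. Write $R \coloneqq L_d(\mathbb{Z})$ and $R_K \coloneqq L_d(\mathcal{O}_K) = \mathcal{O}_K \otimes_\mathbb{Z} R$. Since $\mathcal{O}_K$ is $\mathbb{Z}$-free of rank $n_K \coloneqq [K:\mathbb{Q}]$, the ring $R_K$ is $R$-free of rank $n_K$, and multiplication by $f$ on $R_K$ is an $R$-linear endomorphism whose $R$-determinant equals $N_{K/\mathbb{Q}}(f)$; indeed, after passing to $\mathrm{Frac}(R)$, $R_K$ becomes $K \otimes_\mathbb{Q} \mathrm{Frac}(R)$, on which the $R$-linear determinant of $\cdot f$ is the usual norm $\prod_\tau \tau(f)$.

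The next step is to convert periodic points into a cokernel count via Pontryagin duality. With $M \coloneqq R_K/fR_K$ and $I_n \coloneqq (t_1^n - 1, \ldots, t_d^n - 1)R$, one has
\begin{align*}
|\mathrm{Fix}_{(n\mathbb{Z})^d}(X_f)| = |M/I_n M| = |\mathrm{coker}(\cdot f \colon R_K/I_n R_K \to R_K/I_n R_K)|.
\end{align*}
Since $R/I_n R = \bigotimes_{i=1}^d \mathbb{Z}[t_i]/(t_i^n - 1)$ is $\mathbb{Z}$-free of rank $n^d$, the quotient $N \coloneqq R_K/I_n R_K$ is $\mathbb{Z}$-free of rank $n^d n_K$, and multiplication by $f$ on it is $R/I_n R$-linear with $R/I_n R$-determinant $\overline{N_{K/\mathbb{Q}}(f)}$. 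The transitivity of determinants
\begin{align*}
\det\nolimits_\mathbb{Z}(\cdot f \colon N \to N) = \det\nolimits_\mathbb{Z}(\cdot \overline{N_{K/\mathbb{Q}}(f)} \colon R/I_n R \to R/I_n R),
\end{align*}
together with the identity $|\mathrm{coker}(\psi)| = |\det\nolimits_\mathbb{Z}(\psi)|$ for any injective $\mathbb{Z}$-linear endomorphism of a finite-rank free $\mathbb{Z}$-module, then yields the pointwise equality of fixed-point counts, and Theorem \ref{thmD} applied to $N_{K/\mathbb{Q}}(f)$ completes the proof.

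The injectivity used above is supplied by the hypothesis: for $(n,p)=1$ we have $\mu_n^d \subset T_p^d$, so $N_{K/\mathbb{Q}}(f)$ does not vanish on $\mu_n^d$, and since $t^n - 1$ is separable over $\mathbb{Q}$ the ring $(R/I_n R) \otimes_\mathbb{Z} \mathbb{Q}$ is a product of number fields indexed by Galois orbits of $\mu_n^d$, in each of which $\overline{N_{K/\mathbb{Q}}(f)}$ is nonzero; it is therefore a non-zero-divisor in the $\mathbb{Z}$-torsion-free ring $R/I_n R$. The main obstacle will be verifying the transitivity formula $\det\nolimits_\mathbb{Z}(\phi) = \det\nolimits_\mathbb{Z}(\cdot \det_{R/I_n R}(\phi))$ together with the identification $\det_R(\cdot f \colon R_K \to R_K) = N_{K/\mathbb{Q}}(f)$ as the product $\prod_\tau \tau(f)$ from the introduction; once these standard facts are recorded and the non-vanishing is propagated from $T_p^d$ down to $\mu_n^d$, the remainder is routine bookkeeping.
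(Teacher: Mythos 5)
Your proposal is correct, and it reaches the conclusion by a somewhat different route than the paper. The paper also restricts scalars from $L_d(\mathcal{O}_K)$ to $L_d(\mathbb{Z})$, but it does so at the level of modules: it identifies $L_d(\mathcal{O}_K)/fL_d(\mathcal{O}_K)$ with $\mathrm{Coker}(A_f)=L_d(\mathbb{Z})^{r}/A_fL_d(\mathbb{Z})^{r}$, where $A_f\in M_{r}(L_d(\mathbb{Z}))$ is the matrix of multiplication by $f$ on the free $L_d(\mathbb{Z})$-module $L_d(\mathcal{O}_K)$, then invokes Deninger's matrix-valued theorem (\cite[Theorem 3.2]{D09}, stated as Theorem \ref{thmD'}) to get $h_p(X_{A_f})=m_p(\det A_f)$, and finally uses the norm--determinant transitivity (Lemma \ref{pre1} and Corollary \ref{pre2}) to identify $\det A_f$ with $N_{K/\mathbb{Q}}(f)$. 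You instead stay with the scalar Theorem \ref{thmD} and prove the termwise equality $|\mathrm{Fix}_{(n\mathbb{Z})^d}(X_f)|=|\mathrm{Fix}_{(n\mathbb{Z})^d}(X_{N_{K/\mathbb{Q}}(f)})|$ for each $n$ prime to $p$, via $|\mathrm{coker}(\psi)|=|\det_{\mathbb{Z}}\psi|$ and the transitivity of determinants over $R/I_nR$; in effect you inline, at the finite level, the reduction from the matrix case to the determinant that Deninger's Theorem 3.2 encapsulates. What your route buys is independence from the matrix version of Deninger's theorem (only \cite[Theorem 1.1]{D09} is needed) and an explicit identification of the periodic-point counts, including the finiteness of $\mathrm{Fix}_{(n\mathbb{Z})^d}(X_f)$ via the non-vanishing of $N_{K/\mathbb{Q}}(f)$ on $\mu_n^d\subset T_p^d$; what the paper's route buys is brevity and an immediate generalization to $f\in M_s(L_d(\mathcal{O}_K))$ (Theorem \ref{thmK1'}), which your pointwise argument would also give but with additional bookkeeping. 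The two ingredients you flag as needing verification --- $\det_R(\cdot f)=N_{K/\mathbb{Q}}(f)$ and the transitivity $\det_{\mathbb{Z}}(\phi)=\det_{\mathbb{Z}}\bigl(\cdot\det_{R/I_nR}(\phi)\bigr)$ --- are exactly the content of Corollary \ref{pre2} and Lemma \ref{pre1} (the latter in its standard form over an arbitrary commutative base, valid since $R/I_nR$ is finite free over $\mathbb{Z}$ and $R_K/I_nR_K$ is finite free over $R/I_nR$), so no genuine gap remains.
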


We will prove Theorem \ref{thmK1} in Section 2.

In Section 3, we will discuss the notion of the $p$-adic expansiveness, following Br\"auer \cite{B10}, to explain the statement of our main result. The expansiveness of the action on the dynamical system is classical. For example, it is well-known that for an expansive $\mathbb{Z}^d$-action the notions of topological entropy, measure theoretic entropy and periodic entropy coincide \cite[Theorem A.1]{LSW90}. Because we do not have $p$-adic analogues of topological entropy nor measure theoretic entropy, the $p$-adically expansiveness seems to be important.

In Section 4, we consider the $p$-adic entropy of solenoidal automorphisms. Let $S \subset P$ a subset of the set of all primes $P$. The solenoid $\Sigma_S$ is defined to be the Pontryagin dual of the discrete abelian group $\mathbb{Z}[1/{S}]$. In particular, the Pontryagin dual of $\mathbb{Z}[1/{P}]=\mathbb{Q}$ is called the full solenoid and denoted by $\Sigma$. We fix a matrix $A \in \mathrm{GL}_m (\mathbb{Z}[1/{S}])$. The $\mathbb{Z}$-action on $\mathbb{Z}[1/{S}]^m$ defined by $A$ induces the action on $\Sigma_S^m$ (see Section 4 for detail). Lind and Ward showed that the (measure theoretic) entropy of the $\mathbb{Z}$-action on $\Sigma^m$ as above is given by
\begin{align}\label{explicitLW}
h(\Sigma^m)=\sum_{l \leq \infty} \sum_{|\lambda|_l>1} \log |\lambda|_l \in \mathbb{R}
\end{align}
where $l \leq \infty$ means that $l$ runs over all places of $\mathbb{Q}$ and $\lambda$ runs over all eigenvalues of $A$ with $|\lambda|_l>1$ \cite[Theorems 1, 2]{LW88}. Our goal is to obtain a $p$-adic analogue of this theorem. However, it is not straightforward because the entropy in Lind-Ward's theorem is measure theoretic and we do not have $p$-adic analogue of measure theoretic entropy. Furthermore, if $S$ is infinite, above $\mathbb{Z}$-action on $\Sigma_S^m$ is neither expansive nor $p$-adically expansive. Now, we will modify the dynamical system to make the action $p$-adically expansive and prove the following theorem.

\begin{theorem}\label{thmK2}
Let $S$ be a finite set of primes and we fix $A \in \mathrm{GL}_m \left(\mathbb{Z}\left[1/{S}\right]\right)$ and embeddings $\overline{\mathbb{Q}} \hookrightarrow \mathbb{C}_p$ for all $p \in S$. We see $\mathbb{Z}\left[1/{S}\right]^m$ as a $\mathbb{Z}[t^{\pm 1}]$-module with the structure induced by above $\mathbb{Z}$-action on $\mathbb{Z}\left[1/{S}\right]^m$. Assume that the eigenvalues $ \lambda_1,\dots,\lambda_m$ of $A$ satisfy $|\lambda_k|_p \neq 1$ for all $k=1,\dots,m$ and all $p \in S$. Then the following properties hold $\colon$
\begin{enumerate}
\item As a $\mathbb{Z}[t^{\pm 1}]$-module, $\mathbb{Z}\left[1/{S}\right]^m$ is finitely generated and the $\mathbb{Z}$-action on $\Sigma_S^m$ is $p$-adically expansive.
\item The $p$-adic entropy $h_p(\Sigma_S^m)$ of the $\mathbb{Z}$-action exists for all $p \in S$ and we have
\begin{align*}
h_p(\Sigma_S^m)= \displaystyle \sum_{\substack{l \in S \\ l \neq p}} \displaystyle \sum_{|\lambda_k|_l >1} \log_p |\lambda_k|_l + \displaystyle \sum_{|\lambda_k|_p >1} \log_p \lambda_k.
\end{align*}
\end{enumerate}
Here, we see $\lambda_k \in \mathbb{C}_p$ under the fixed embedding $\overline{\mathbb{Q}} \hookrightarrow \mathbb{C}_p$ for all $p \in S$.
\end{theorem}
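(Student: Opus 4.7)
I would prove the two assertions separately: Part (1) by a local-to-global computation at each prime $p$, and Part (2) by decomposing $N := \mathbb{Z}[1/S]^m$ into cyclic $\mathbb{Z}[t^{\pm 1}]$-modules and reducing to Theorem~\ref{thmD} (and, where convenient, Theorem~\ref{thmK1}) on each piece.

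For Part (1), to show $N$ is generated as a $\mathbb{Z}[t^{\pm 1}]$-module by the standard basis $e_1,\dots,e_m$, I set $G := \mathbb{Z}[A,A^{-1}]\mathbb{Z}^m \subseteq N$ and check $G \otimes \mathbb{Z}_p = N \otimes \mathbb{Z}_p$ at every prime $p$. For $p \notin S$ both sides are $\mathbb{Z}_p^m$, because $\det A \in \mathbb{Z}[1/S]^\times$ is a $p$-adic unit and so $A \in \mathrm{GL}_m(\mathbb{Z}_p)$. For $p \in S$, the hypothesis $|\lambda_k|_p \neq 1$ yields the $p$-adic hyperbolic decomposition $\mathbb{Q}_p^m = V^+ \oplus V^-$ into the spans of the expanding and contracting generalised eigenspaces of $A$; iterating $A$ on a $\mathbb{Z}_p$-lattice inside $V^+$ and $A^{-1}$ on one inside $V^-$ exhausts the whole $\mathbb{Q}_p$-space, so $G \otimes \mathbb{Z}_p = \mathbb{Q}_p^m = N \otimes \mathbb{Z}_p$. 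The $p$-adic expansiveness is then a direct consequence of the criterion developed in Section~3: the $\mathbb{Z}$-action on $\widehat{N}$ is $p$-adically expansive precisely when the characteristic polynomial of $A$ has no zero on $T_p^1$, and its zeros are the $\lambda_k$.

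For Part (2) I would first decompose $\mathbb{Q}^m$ via the rational canonical form of $A$ as $\bigoplus_i \mathbb{Q}[t]/(f_i(t))$ with invariant factors $f_i \in \mathbb{Q}[t]$, rescale to $g_i := D_i f_i \in \mathbb{Z}[t]$, and identify $\widehat{N}$ -- up to a commensurability with finite kernel and cokernel -- with the product $\prod_i X_{g_i}$. Since every root of $g_i$ is some eigenvalue $\lambda_k$, the hypothesis $|\lambda_k|_p \neq 1$ guarantees that $g_i$ does not vanish on $T_p^1$, so Theorem~\ref{thmD} gives $h_p(X_{g_i}) = m_p(g_i)$. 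Invoking additivity of $p$-adic entropy along the resulting exact sequences (legitimate because each factor remains $p$-adically expansive by Part (1)), I conclude $h_p(\Sigma_S^m) = \sum_i m_p(g_i)$. When some $f_i$ is irreducible over $\mathbb{Q}$ of degree $> 1$ it may be cleaner to apply Theorem~\ref{thmK1} over the number field $K_i = \mathbb{Q}[t]/(f_i)$ instead, obtaining the same contribution through the norm $N_{K_i/\mathbb{Q}}$.

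The last step is to evaluate each Mahler measure via the $p$-adic Jensen-type identity
\begin{align*}
m_p(g_i) = \log_p D_i + \sum_{|\mu|_p > 1} \log_p \mu,
\end{align*}
where $\mu$ runs over the roots of $g_i$ in $\overline{\mathbb{Q}}_p$, and to sum over $i$. The root contributions collect exactly to the second sum $\sum_{|\lambda_k|_p > 1} \log_p \lambda_k$, while the leading coefficient contribution $\sum_i \log_p D_i$ collects, using $\log_p p = 0$ together with the fact that $\prod_i D_i$ records the $l$-adic denominators of the eigenvalues for $l \in S$, to $\sum_{l \in S,\, l \neq p} \sum_{|\lambda_k|_l > 1} \log_p |\lambda_k|_l$. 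I expect the main obstacle to be the commensurable identification of $N$ with $\bigoplus_i \mathbb{Z}[1/S][t]/(f_i(t))$ as $\mathbb{Z}[t^{\pm 1}]$-modules together with the verification of additivity of $p$-adic entropy on the resulting short exact sequences, plus the (essentially combinatorial) bookkeeping that converts the leading coefficients $D_i$ into the claimed sum of $l$-adic valuations of the $\lambda_k$ over $l \in S \setminus \{p\}$.
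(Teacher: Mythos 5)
Your route is genuinely different from the paper's, and for the most part it can be made to work. The paper proves Part (1) by an explicit Cayley--Hamilton/Newton-polygon manipulation: the hypothesis $|\lambda_k|_p\neq 1$ forces a unique coefficient of $\chi_A$ of maximal $p$-adic absolute value, and from this one constructs an explicit $g_p(t)\in\mathbb{Z}[t^{\pm 1}]$ with $g_p(A)=p^{-1}I$, which immediately generates all denominators. For Part (2) the paper does \emph{not} pass through Theorem \ref{thmD} at all: since $\mathbb{Z}[1/S]$ is a PID, Smith normal form (Lemma \ref{lemma4.1}) gives the exact closed formula $|\mathrm{Fix}_{n\mathbb{Z}}(\Sigma_S^m)|=\prod_{l\in S}|\det(I-A^n)|_l\cdot|\det(I-A^n)|$, and $\tfrac1n\log_p$ of this is evaluated directly, the two sums in the statement arising from the $l$-adic factors ($l\neq p$) and from $\log_p\prod_k(1-\lambda_k^n)$ respectively. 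This is shorter than your reduction and avoids both the commensurability identification and any addition formula. Your decomposition via the rational canonical form, Theorem \ref{thmD} on each invariant factor, and the $p$-adic Jensen formula of Besser--Deninger does produce the same two sums (the leading-coefficient bookkeeping via Newton polygons is correct), so it is a legitimate alternative, at the cost of two extra lemmas you would have to supply: the commensurability of $\mathbb{Z}[1/S]^m$ with $\bigoplus_i\mathbb{Z}[t^{\pm1}]/(g_i)$ (true, since both are full $\mathbb{Z}[1/S]$-lattices in $\mathbb{Q}^m$), and the invariance of $p$-adic entropy under finite kernels and cokernels.

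One justification in your sketch is wrong as stated: additivity of $p$-adic entropy along exact sequences is \emph{not} ``legitimate because each factor remains $p$-adically expansive.'' The paper's own remark in Section 3 (citing Br\"auer's Example 7.1) points out that $p$-adic expansiveness does not even guarantee the existence of $p$-adic entropy, and there is no $p$-adic analogue of the Yuzvinskii addition formula to appeal to. What actually saves your argument is much weaker and should be said explicitly: for a direct product, fixed-point sets multiply on the nose; and for an isogeny with finite kernel $F$, the cohomology sequence for $n\mathbb{Z}\cong\mathbb{Z}$ traps the ratio of fixed-point counts between $|F|^{-1}$ and $|F|$, so it takes finitely many values and its $\tfrac1n\log_p$ tends to $0$. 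With that substitution (and a proof of the commensurability you yourself flag as the main obstacle), your argument closes.
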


\vspace{10pt}
\noindent
\textsc{Notation:}~ In this paper, let $p$ be a prime, $\mathbb{C}_p$ be the completion of $\overline{\mathbb{Q}}_p$ with the norm $|p|_p=p^{-1}$ and $\log_p$ be the $p$-adic logarithm with $\log_p p=0$. For a positive integer $d$ and a commutative ring $A$, we write $L_d(A)=A[t_1^{\pm 1},\cdots,t_d^{\pm 1}]$. For a locally compact abelian group $M$, we denote its Pontryagin dual by $\hat{M}$ or $M^\wedge$.

\vspace{10pt}
\noindent
\textsc{Acknowledgment:}~ The author would like to thank my supervisor Professor Takao Yamazaki so much for his advice and helpful comments. This paper is based on the author's master thesis. This work was supported in part by the WISE Program for AI Electronics, Tohoku University.

\section{Proof of Theorem \ref{thmK1}}

In this section, we will prove Theorem \ref{thmK1}.

\begin{lemma}\label{pre1}
Let $L/K$ be a finite extension of fields and $V$ be a finite dimensional $L$-vector space. For any $f \in {\rm End}_L(V)$, we have
\begin{align*}
{\rm det}_K f=N_{L/K}({\rm det}_L f).
\end{align*}
\end{lemma}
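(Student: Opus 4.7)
The plan is to reduce the statement to a matrix identity by choosing compatible bases, and then exploit multiplicativity to handle only diagonal and elementary matrices.

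Set $r=[L:K]$ and $n=\dim_L V$. I would fix an $L$-basis $e_1,\dots,e_n$ of $V$ and a $K$-basis $v_1,\dots,v_r$ of $L$, so that $\{v_i e_j\}$ forms a $K$-basis of $V$. For $a \in L$, let $[a] \in M_r(K)$ denote the matrix of the multiplication-by-$a$ map $L \to L$ in the basis $\{v_i\}$; by definition $\det_K[a] = N_{L/K}(a)$. Writing the matrix of $f$ in the $L$-basis as $A=(a_{ij}) \in M_n(L)$, the matrix $\tilde A \in M_{nr}(K)$ of $f$ (viewed as a $K$-linear endomorphism) in the basis $\{v_i e_j\}$ is the block matrix whose $(i,j)$-block is $[a_{ij}]$. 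The goal is therefore to show that $\det_K(\tilde A) = N_{L/K}(\det_L(A))$.

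Both sides of this identity are multiplicative in $A$: the assignment $A \mapsto \tilde A$ is a ring homomorphism $M_n(L) \to M_{nr}(K)$, so $\det_K(\widetilde{AB}) = \det_K(\tilde A \tilde B) = \det_K(\tilde A)\det_K(\tilde B)$; and on the right, $N_{L/K} \circ \det_L$ is plainly multiplicative. Using Gaussian elimination over the field $L$, any invertible $A \in \mathrm{GL}_n(L)$ is a product of diagonal matrices and elementary transvections $I + c E_{ij}$ with $i \neq j$, $c \in L$. Hence it suffices to verify the identity on these two classes (and on singular matrices).

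For a diagonal matrix $A = \mathrm{diag}(a_1,\dots,a_n)$ the block matrix $\tilde A$ is block diagonal with blocks $[a_1],\dots,[a_n]$, so
\[
\det_K \tilde A = \prod_{k=1}^n \det_K[a_k] = \prod_{k=1}^n N_{L/K}(a_k) = N_{L/K}\bigl(\textstyle\prod a_k\bigr) = N_{L/K}(\det_L A).
\]
For a transvection $A = I + cE_{ij}$ the block matrix $\tilde A$ is unipotent (conjugate to an upper triangular matrix with identity blocks on the diagonal), so $\det_K \tilde A = 1 = N_{L/K}(1) = N_{L/K}(\det_L A)$. Finally, if $A$ is singular, then $A$ has a non-zero kernel as an $L$-linear map; that kernel is also non-zero over $K$, so $\tilde A$ is singular and both sides vanish.

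No genuine obstacle arises: the only delicate point is the bookkeeping of the block matrix under the identification of the $K$-basis $\{v_i e_j\}$, which amounts to checking that multiplication on $V$ by an element of $L$ corresponds, block-wise, to the regular representation $L \hookrightarrow M_r(K)$; the rest is elementary linear algebra together with the definition of $N_{L/K}$ as $\det_K$ of the regular representation.
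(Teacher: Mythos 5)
Your proof is correct and complete: the reduction via the ring homomorphism $A\mapsto\tilde A$ to diagonal matrices, transvections, and the singular case is the standard elementary-linear-algebra argument, and each verification (block-diagonal determinant, block-triangular transvection, nonzero kernel) is sound. The paper itself omits the proof of this lemma, stating only that it follows from elementary linear algebra, so your write-up supplies exactly the argument the author had in mind.
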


The proof of Lemma \ref{pre1} is given by an elementary linear algebra and hence omitted.

\begin{corollary}\label{pre2}
Let $K$ be a number field and $F_1$ (resp. $F_2$) be the fractional field of $L_d(\mathbb{Q})$ (resp. $L_d(K)$). For any $f \in {\rm GL}_s(F_2)$, we have
\begin{align*}
N_{K/\mathbb{Q}}({\rm det}_{F_2}f)={\rm det}_{F_1}f.
\end{align*}
Here, for ${\rm Hom}_{\mathbb{Q}}(K,\overline{\mathbb{Q}})=\{\tau_1, \cdots ,\tau_r\}$ and $P=\sum_{\nu \in \mathbb{Z}^d} a_{\nu}t^{\nu} \in F_2$, we define $\tau_i(P)=\sum_{\nu \in \mathbb{Z}^d} \tau_i(a_{\nu})t^{\nu} \in F_2,N_{K/\mathbb{Q}}(P)=\prod_{i=1}^r \tau_i(P)$.
\end{corollary}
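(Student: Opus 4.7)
The plan is to reduce to Lemma \ref{pre1} applied to the field extension $F_2/F_1$, and then identify the norm $N_{F_2/F_1}$ with $N_{K/\mathbb{Q}}$ acting coefficient-wise on Laurent polynomials (or rational functions in the $t_i$).

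First I would verify that $F_2/F_1$ is a finite extension of degree $r=[K:\mathbb{Q}]$. Since $L_d(K)=L_d(\mathbb{Q})\otimes_{\mathbb{Q}}K$ and $K/\mathbb{Q}$ is finite separable, $F_2$ is obtained from $F_1$ by adjoining the generators of $K$, so $[F_2:F_1]=[K:\mathbb{Q}]=r$. View $V:=F_2^{\,s}$ as an $F_1$-vector space of dimension $sr$, and regard $f\in\mathrm{GL}_s(F_2)\subset\mathrm{End}_{F_2}(V)$ as an element of $\mathrm{End}_{F_1}(V)$ as well. Applying Lemma \ref{pre1} to $L=F_2$, $K=F_1$ gives
\begin{align*}
\det\nolimits_{F_1}f \;=\; N_{F_2/F_1}\!\bigl(\det\nolimits_{F_2}f\bigr).
\end{align*}

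Next I would identify the field-theoretic norm $N_{F_2/F_1}$ with the coefficient-wise norm. Any embedding $\sigma:F_2\hookrightarrow\overline{F_1}$ over $F_1$ fixes each $t_j$, and its restriction to $K$ is an embedding $K\hookrightarrow\overline{\mathbb{Q}}$; conversely, each $\tau_i\in\mathrm{Hom}_{\mathbb{Q}}(K,\overline{\mathbb{Q}})$ extends uniquely to an embedding $\widetilde{\tau}_i$ of $F_2$ over $F_1$ by acting on coefficients and fixing the $t_j$. Since there are $r$ such embeddings on each side, this gives a bijection. Hence for any $P\in F_2$,
\begin{align*}
N_{F_2/F_1}(P) \;=\; \prod_{i=1}^{r}\widetilde{\tau}_i(P) \;=\; \prod_{i=1}^{r}\tau_i(P) \;=\; N_{K/\mathbb{Q}}(P),
\end{align*}
where the middle equality uses the convention introduced in the statement for extending $\tau_i$ to rational functions in $t_1,\dots,t_d$. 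Applying this with $P=\det_{F_2}f$ and combining with the display above yields the claimed equality $\det_{F_1}f=N_{K/\mathbb{Q}}(\det_{F_2}f)$.

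There is no real obstacle: the only mildly delicate point is the bijection between embeddings of $F_2/F_1$ and embeddings of $K/\mathbb{Q}$, which is immediate from separability of $K/\mathbb{Q}$ and the fact that the $t_j$ are algebraically independent transcendentals. Everything else is a direct invocation of Lemma \ref{pre1}.
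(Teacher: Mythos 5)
Your proposal is correct and follows essentially the same route as the paper: reduce to Lemma \ref{pre1} applied to the extension $F_2/F_1$ and identify $N_{F_2/F_1}$ with the coefficient-wise norm $N_{K/\mathbb{Q}}$ via the bijection of embeddings. The paper simply asserts this identification without proof, so your explicit verification that each $\tau_i$ extends to an $F_1$-embedding fixing the $t_j$ (and that these exhaust $\mathrm{Hom}_{F_1}(F_2,\overline{F_1})$) is a welcome, but not divergent, elaboration.
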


\begin{proof}
We may identify ${\rm Hom}_{F_1}(F_2,\overline{F_1})$ and $N_{F_2/F_1}$ with ${\rm Hom}_{\mathbb{Q}}(K,\overline{\mathbb{Q}})$ and $N_{K/\mathbb{Q}}$ respectively. Using Lemma \ref{pre1}, we get
\begin{align*}
{\rm det}_{F_1}f={\rm det}_{F_1}({\rm det}_{F_2}f)=N_{K/\mathbb{Q}}({\rm det}_{F_2}f).
\end{align*}
\end{proof}

\begin{theorem}\cite[Theorem 3.2]{D09}\label{thmD'}
Assume that $f \in M_r(L_d(\mathbb{Z}))$ and $\det f$ does not vanish at any point of the $p$-adic $d$-torus $T_p^d$ given by (\ref{p-adic torus}). Then the $p$-adic entropy $h_p(X_f)$ of the $\mathbb{Z}^d$-action on $X_f \coloneqq (L_d(\mathbb{Z})^r/{fL_d(\mathbb{Z})^r})^\wedge$ exists and we have
\begin{align*}
h_p(X_f)=m_p(\det f).
\end{align*}
Here, the $\mathbb{Z}^d$-action on $X_f$ is induced by the $\mathbb{Z}^d$-action on $L_d(\mathbb{Z})^r$ given by
\begin{align}\label{actiononX_f'}
\delta \cdot \left( \displaystyle \sum_{\nu \in \mathbb{Z}^d} a_\nu^{(1)} t^\nu, \cdots, \sum_{\nu \in \mathbb{Z}^d} a_\nu^{(r)} t^\nu \right)= \left( \displaystyle \sum_{\nu \in \mathbb{Z}^d} a_\nu^{(1)} t^{\nu + \delta}, \cdots ,\sum_{\nu \in \mathbb{Z}^d} a_\nu^{(r)} t^{\nu + \delta} \right)
\end{align}
and $h_p(X_f)$ is given by Definition \ref{def of p-adic entropy}.
\end{theorem}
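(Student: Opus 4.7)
The plan is to reduce Theorem \ref{thmD'} to its scalar version, Theorem \ref{thmD}, by showing that for every positive integer $n$ with $(n,p)=1$ the fixed-point counts satisfy
\[
|\mathrm{Fix}_{(n\mathbb{Z})^d}(X_f)| = \Big|\prod_{\zeta \in \mu_n^d}(\det f)(\zeta)\Big|.
\]
The right-hand side is exactly the count $|\mathrm{Fix}_{(n\mathbb{Z})^d}(X_{\det f})|$ produced by applying the same formula to the scalar polynomial $\det f \in L_d(\mathbb{Z})$, so the matrix system $X_f$ and the scalar system $X_{\det f}$ share identical fixed-point sequences along $(n,p)=1$. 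Dividing by $n^d$ and letting $n \to \infty$ then gives $h_p(X_f) = h_p(X_{\det f}) = m_p(\det f)$ by Theorem \ref{thmD} applied to $\det f$.

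To establish the identity I would pass through Pontryagin duality. Setting $M := L_d(\mathbb{Z})^r/fL_d(\mathbb{Z})^r$ and $R_n := \mathbb{Z}[t_1,\ldots,t_d]/(t_1^n-1,\ldots,t_d^n-1)$, a character of $M$ is $(n\mathbb{Z})^d$-fixed exactly when it factors through $M/(t_i^n-1)_i M \cong R_n^r/fR_n^r$, giving $|\mathrm{Fix}_{(n\mathbb{Z})^d}(X_f)| = |R_n^r/fR_n^r|$ whenever the quotient is finite. Since $R_n$ is $\mathbb{Z}$-free of rank $n^d$ and (under our hypothesis) $f$ acts injectively on $R_n^r$, the order of the quotient equals $|\det_\mathbb{Z}(f \mid R_n^r)|$. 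I would compute this $\mathbb{Z}$-determinant by base-changing to $\mathbb{C}$ via the Chinese remainder decomposition $R_n \otimes_\mathbb{Z}\mathbb{C} \cong \bigoplus_{\zeta \in \mu_n^d}\mathbb{C}$, on whose $\zeta$-summand $t_i$ acts by $\zeta_i$; on the corresponding $\mathbb{C}^r$ the endomorphism $f$ acts as the numerical matrix $f(\zeta)$, so
\[
\det_\mathbb{Z}(f \mid R_n^r) = \prod_{\zeta \in \mu_n^d}\det f(\zeta) = \prod_{\zeta \in \mu_n^d}(\det f)(\zeta).
\]
The injectivity of $f$ on $R_n^r$ comes from the non-vanishing hypothesis: for $(n,p)=1$ the roots of unity $\mu_n \subset \mathbb{C}_p$ are $p$-adic units, so $\mu_n^d \subset T_p^d$ and each factor $(\det f)(\zeta)$ is nonzero.

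Finally, since $|\mathrm{Fix}_{(n\mathbb{Z})^d}(X_f)|$ is a positive integer and $\log_p(\pm 1)=0$,
\[
\log_p|\mathrm{Fix}_{(n\mathbb{Z})^d}(X_f)| = \log_p \prod_{\zeta \in \mu_n^d}(\det f)(\zeta) = \sum_{\zeta \in \mu_n^d}\log_p(\det f)(\zeta),
\]
and dividing by $n^d$ and passing to the limit along $(n,p)=1$ produces $m_p(\det f)$ by definition. The main obstacle is justifying the determinant identity rigorously: one must check that $\det_\mathbb{Z}$ is preserved under base change to $\mathbb{Q}$ and to $\mathbb{C}$, and that over the étale $\mathbb{Q}$-algebra $R_n\otimes\mathbb{Q} = \prod_\zeta \mathbb{Q}(\zeta)$ the determinant factors as the product of the componentwise determinants — this is exactly the content of Lemma \ref{pre1} applied to each finite extension $\mathbb{Q}(\zeta)/\mathbb{Q}$, combined with the block-diagonal shape of $f$ under the decomposition.
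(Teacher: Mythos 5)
The paper does not prove this statement at all --- it is quoted verbatim from Deninger \cite[Theorem 3.2]{D09} and used as a black box in the proof of Theorem \ref{thmK1'} --- so there is no internal proof to compare against. Judged on its own, your reduction is correct and is essentially the standard route: the identification $\mathrm{Fix}_{(n\mathbb{Z})^d}(X_f)=\bigl(R_n^r/fR_n^r\bigr)^\wedge$, the computation $|R_n^r/fR_n^r|=\bigl|\det_{\mathbb{Z}}(f\mid R_n^r)\bigr|=\bigl|\prod_{\zeta\in\mu_n^d}(\det f)(\zeta)\bigr|$ via $R_n\otimes\mathbb{C}\cong\bigoplus_{\zeta\in\mu_n^d}\mathbb{C}$, the observation that $\mu_n^d\subset T_p^d$ for $(n,p)=1$ guarantees injectivity and hence finiteness, and the fact that the Iwasawa logarithm kills signs and turns the product into $\sum_{\zeta}\log_p(\det f)(\zeta)$ are all sound. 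The one point worth stating more carefully is the final convergence step: matching fixed-point counts with $X_{\det f}$ and citing Theorem \ref{thmD} does close the argument, but only because the scalar case of your own counting formula shows that $\frac{1}{n^d}\log_p|\mathrm{Fix}_{(n\mathbb{Z})^d}(X_{\det f})|$ is literally the same sequence $\frac{1}{n^d}\sum_{\zeta\in\mu_n^d}\log_p(\det f)(\zeta)$ whose limit defines $m_p(\det f)$; the existence of that limit is the genuinely nontrivial analytic input (due to Besser--Deninger \cite{BD99}), and your proof correctly outsources it rather than reproving it.
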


Theorem \ref{thmK1} follows immediately from the following theorem.

\begin{theorem}\label{thmK1'}
Let $K$ be a number field and $\mathcal{O}_K$ be the ring of integers of $K$. Assume that $f \in M_s(L_d(\mathcal{O}_K))$ and $N_{K/\mathbb{Q}}(\det f)$ does not vanish at any point of the $p$-adic $d$-torus $T_p^d$. Then the $p$-adic entropy $h_p(X_f)$ of the $\mathbb{Z}^d$-action on $X_f\coloneqq (L_d(\mathcal{O}_K)^s/{fL_d(\mathcal{O}_K)^s})^\wedge$ exists and we have
\begin{align*}
h_p(X_f)=m_p(N_{K/\mathbb{Q}}(\det f)).
\end{align*}
Here, the $\mathbb{Z}^d$-action on $X_f$ is induced by the $\mathbb{Z}^d$-action on $L_d(\mathbb{Z})^r$ as (\ref{actiononX_f'}).
\end{theorem}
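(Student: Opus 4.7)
The plan is to perform a restriction-of-scalars reduction from $\mathcal{O}_K$ down to $\mathbb{Z}$ and then invoke Deninger's matrix version, Theorem \ref{thmD'}. Fix an integral basis $\omega_1,\dots,\omega_r$ of $\mathcal{O}_K$ over $\mathbb{Z}$ where $r=[K:\mathbb{Q}]$. This yields an isomorphism of $L_d(\mathbb{Z})$-modules
\begin{align*}
L_d(\mathcal{O}_K)=\mathcal{O}_K\otimes_{\mathbb{Z}}L_d(\mathbb{Z})\cong L_d(\mathbb{Z})^r,
\end{align*}
and hence $L_d(\mathcal{O}_K)^s\cong L_d(\mathbb{Z})^{rs}$. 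The first thing I would check is that the $\mathbb{Z}^d$-action (\ref{actionX_f}) on $L_d(\mathcal{O}_K)$ is nothing but multiplication by $t^\delta$ inside the $L_d(\mathbb{Z})$-module $L_d(\mathcal{O}_K)$; therefore under the above isomorphism the $\mathbb{Z}^d$-action on $L_d(\mathcal{O}_K)^s$ of type (\ref{actiononX_f'}) corresponds precisely to the $\mathbb{Z}^d$-action (\ref{actiononX_f'}) on $L_d(\mathbb{Z})^{rs}$.

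Next, the $L_d(\mathcal{O}_K)$-linear map ``left multiplication by $f$'' on $L_d(\mathcal{O}_K)^s$ becomes, after restriction of scalars to $L_d(\mathbb{Z})$, represented in the basis coming from $\{\omega_i\}$ by some matrix $\tilde f\in M_{rs}(L_d(\mathbb{Z}))$. Since this restriction commutes with the $\mathbb{Z}^d$-action (both sides act as $L_d(\mathbb{Z})$-linear endomorphisms), the isomorphism above descends to a $\mathbb{Z}^d$-equivariant identification
\begin{align*}
L_d(\mathcal{O}_K)^s/f\,L_d(\mathcal{O}_K)^s\;\cong\;L_d(\mathbb{Z})^{rs}/\tilde f\,L_d(\mathbb{Z})^{rs},
\end{align*}
whose Pontryagin duals give $X_f\cong X_{\tilde f}$ as $\mathbb{Z}^d$-spaces. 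In particular $h_p(X_f)=h_p(X_{\tilde f})$ provided the latter exists.

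Now I would apply Corollary \ref{pre2} with $F_1$ the fraction field of $L_d(\mathbb{Q})$ and $F_2$ that of $L_d(K)$: viewing $f$ as an element of $\mathrm{GL}_s(F_2)$ (this is permitted because $N_{K/\mathbb{Q}}(\det f)\neq 0$ forces $\det f\neq 0$ in $F_2$), the matrix $\tilde f$ is precisely a representation of $f$ on the underlying $F_1$-vector space, so
\begin{align*}
\det \tilde f=\det{}_{F_1}f=N_{K/\mathbb{Q}}(\det f)\in L_d(\mathbb{Z}).
\end{align*}
By hypothesis this polynomial does not vanish on $T_p^d$, so Theorem \ref{thmD'} applies to $\tilde f$ and yields $h_p(X_{\tilde f})=m_p(\det\tilde f)=m_p(N_{K/\mathbb{Q}}(\det f))$, which combined with the identification of the previous paragraph completes the proof.

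There is no serious analytic or combinatorial obstacle here: the entire argument is structural, turning an $\mathcal{O}_K$-coefficient problem into a $\mathbb{Z}$-coefficient one via restriction of scalars, and the only substantive input beyond bookkeeping is the multiplicativity of the determinant under field extensions already packaged in Corollary \ref{pre2}. The most delicate point to spell out is probably the equivariance check that the $\mathbb{Z}^d$-action survives the change of basis intact, but since the action is implemented by multiplication by monomials $t^\delta\in L_d(\mathbb{Z})$ this is automatic from the $L_d(\mathbb{Z})$-linearity of the identification.
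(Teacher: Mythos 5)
Your proposal is correct and follows essentially the same route as the paper: restriction of scalars via an integral basis turns multiplication by $f$ into a matrix $A_f\in M_{rs}(L_d(\mathbb{Z}))$, the cokernels (hence fixed-point counts) are identified, and Corollary \ref{pre2} plus Theorem \ref{thmD'} give $h_p(X_f)=m_p(\det A_f)=m_p(N_{K/\mathbb{Q}}(\det f))$. Your extra attention to the $\mathbb{Z}^d$-equivariance of the identification is a detail the paper leaves implicit, but the argument is the same.
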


\begin{proof}
Let $[K \colon \mathbb{Q}]=r$. For $f \in M_s(L_d(\mathcal{O}_K))$, we define $L_d(\mathbb{Z})$-linear homomorphism
\begin{align*}
\varphi_f \colon L_d(\mathcal{O}_K)^s \rightarrow L_d(\mathcal{O}_K)^s
\end{align*}
by
\begin{align*}
x \mapsto fx
\end{align*}
and let $A_f \in M_{rs}(L_d(\mathbb{Z}))$ be its matrix representation. Since
\begin{align*}
L_d(\mathcal{O}_K)^s/{f \cdot L_d(\mathcal{O}_K)^s}={\rm Coker}(\varphi_f) \simeq {\rm Coker}(A_f)=L_d(\mathbb{Z})^{rs}/{A_fL_d(\mathbb{Z})^{rs}},
\end{align*}
we get
\begin{align*}
\left|{\rm Fix}_{(n \mathbb{Z})^d} \left(L_d(\mathcal{O}_K)^s/{f \cdot L_d(\mathcal{O}_K)^s}\right)^\wedge \right|=\left|{\rm Fix}_{(n \mathbb{Z})^d} \left(L_d(\mathbb{Z})^{rs}/{A_fL_d(\mathbb{Z})^{rs}}\right)^\wedge \right|.
\end{align*}
Theorem \ref{thmD'} and Corollary \ref{pre2} imply
\begin{align*}
h_p(X_f)=h_p(X_{A_f})=m_p(\det A_f)=m_p(N_{K/\mathbb{Q}}(\det f)).
\end{align*}
\end{proof}

\begin{example}
Let $K=\mathbb{Q}(\sqrt{2})$ and $f=3t+\sqrt{2} \in \mathcal{O}_K [t^{\pm 1}]$. Since the equation
\begin{align*}
N_{K/\mathbb{Q}}(f)=9t^2-2=0
\end{align*}
has the roots $t=\pm \sqrt{2}/{3}$, we have
\begin{equation*}
\left|\frac{\sqrt{2}}{3}\right|_p=
\begin{cases}
  1  &  (p \neq 2,3) \\
  2^{-\frac{1}{2}}  &  (p=2) \\
  3  &  (p=3).
\end{cases}
\end{equation*}
If $p=2,3$, there exists the $p$-adic entropy of $f$ and we have
\begin{eqnarray*}
h_2(X_f)   &=&   m_2(9t^2-2)   =   \log_2 9 \in \mathbb{C}_2 , \\
h_3(X_f)   &=&   m_3(9t^2-2)   =   \log_3 2 \in \mathbb{C}_3.
\end{eqnarray*}
Note that
\begin{align*}
h(X_f)=m(9t^2-2)=\log 9 \in \mathbb{R}.
\end{align*}
\end{example}

\section{$p$-adically expansiveness}

In this section, we will recall the (classical) expansiveness and explain the $p$-adically expansiveness, which is introduced by Br\"auer. Let $\Gamma$ be a countable discrete group in this section.

\begin{definition}
Let $X$ be a compact metrizable topological space and assume that $\Gamma$ acts on $X$. A continuous action of $\Gamma$ on $X$ is expansive if  the following holds: There is a metric $d$ defining the topology of $X$ and some $\epsilon >0$ such that for every pair of distinct points $x \neq y$ in $X$ there exists an element $\gamma \in \Gamma$ with $d(\gamma x,\gamma y) \geq \epsilon$.
\end{definition}

It is known that if the action of $\Gamma$ on $X$ is expansive, for every cofinite normal subgroup $\Lambda$ of $\Gamma$, ${\rm Fix}_{\Lambda}(X)$ is finite. Moreover, for the case $\Gamma=\mathbb{Z}^d$ the following holds.

\begin{theorem}\cite[Theorem A.1]{LSW90}
Let $X$ be a compact metrizable abelian group and assume that $\mathbb{Z}^d$ acts expansively on $X$. Then, topological entropy, measure theoretic entropy and periodic entropy coincide.
\end{theorem}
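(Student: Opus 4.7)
The plan is to prove the three equalities by reducing, via Pontryagin duality and a prime filtration, to a single ``atomic'' case in which all three entropies can be computed explicitly by a common Mahler-type invariant.

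First I would pass to the dual side. Since $X$ is compact metrizable abelian, Pontryagin duality identifies $X$ with $\hat M$ for a countable discrete abelian group $M$, and the continuous $\mathbb{Z}^d$-action makes $M$ a module over the Noetherian ring $R \coloneqq L_d(\mathbb{Z}) = \mathbb{Z}[\mathbb{Z}^d]$. The expansiveness hypothesis translates (via the easier half of the Br\"auer/LSW dual characterisation) into $M$ being finitely generated over $R$, together with a spectral condition to the effect that the associated primes of $M$ have no common zero on the torus $T^d$ of \eqref{torus}. Under these hypotheses, $\lvert {\rm Fix}_{(n\mathbb{Z})^d}(X)\rvert$ is finite for every $n$, so the periodic entropy makes sense at all.

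Next I would use a prime filtration $0 = M_0 \subset M_1 \subset \cdots \subset M_n = M$ with successive quotients $M_i/M_{i-1} \simeq R/\mathfrak{p}_i$, and show that each of the three entropies is additive on short exact sequences of expansive systems: topological and Haar-measure entropy by Yuzvinskii's addition formula for compact abelian group extensions, periodic entropy by a direct counting argument using that Pontryagin duality takes the short exact sequence $0 \to M_{i-1} \to M_i \to R/\mathfrak{p}_i \to 0$ to an exact sequence of compact abelian groups and preserves fixed-point cardinalities up to bounded factors after passing to the ${\rm Fix}_{(n\mathbb{Z})^d}(\cdot)$ functor. This additivity reduces the theorem to the case $M = R/\mathfrak{p}$ for a single prime ideal $\mathfrak{p}$ whose variety does not meet $T^d$.

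On each prime quotient $R/\mathfrak{p}$ I would then verify directly that all three entropies equal the same integral, namely the logarithmic Mahler-type measure attached to $V(\mathfrak{p})$. The topological entropy is computed by a Bowen metric argument that converts it into such an integral; the Haar-measure entropy coincides with the topological one via the variational principle, combined with the fact that for group automorphisms the Haar measure is the measure of maximal entropy (Berg--Sinai for automorphisms of compact abelian groups, extended to $\mathbb{Z}^d$-actions); and the periodic entropy reduces to a Riemann-sum approximation of the same integral, because for $f$ a generator one has $\lvert {\rm Fix}_{(n\mathbb{Z})^d}(X_f)\rvert$ equal (up to controlled error) to $\prod_{\zeta \in \mu_n^d} \lvert f(\zeta)\rvert$.

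The main obstacle is making the last Riemann-sum step uniform. Near points of $T^d$ where $\lvert f\rvert$ is small, the logarithm contributes large negative terms, and one must show that the contribution of roots of unity at which $\lvert f(\zeta)\rvert$ is atypically small is negligible in the limit. This requires a uniform Diophantine (Baker-type) lower bound on $\lvert f(\zeta)\rvert$ away from the vanishing locus, and it is precisely the non-vanishing of $f$ on $T^d$ guaranteed by expansiveness that makes such a bound possible. Reproducing this uniform estimate, in the generality of prime quotients $R/\mathfrak{p}$ rather than just hypersurfaces $R/(f)$, is where the bulk of the technical work in the LSW proof lies and would be the principal hurdle in rebuilding it here.
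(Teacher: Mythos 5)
First, a point of order: the paper does not prove this statement at all --- it is imported verbatim from \cite{LSW90} and used as a black box --- so your proposal can only be measured against Lind--Schmidt--Ward's own argument. Your overall architecture (dualize to a finitely generated $L_d(\mathbb{Z})$-module $M$, take a prime filtration, prove additivity of each of the three entropies along the filtration, and compute them all on an atomic quotient $R/\mathfrak{p}$) is indeed the shape of the LSW proof, and the identification of topological with Haar-measure entropy via the variational principle and maximality of Haar measure is standard (and does not even require expansiveness).

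However, you have mislocated the main difficulty and left the actual ones unaddressed. The Baker-type Diophantine lower bound you single out as the principal hurdle is not needed here: expansiveness forces $V_{\mathbb{C}}(\mathfrak{p}) \cap T^d = \emptyset$ for every associated prime (Theorem \ref{expansive3}), so for a principal prime $(f)$ the function $\log|f|$ is continuous and bounded on the compact torus $T^d$ and the Riemann sums $n^{-d}\sum_{\zeta \in \mu_n^d} \log|f(\zeta)|$ converge to $m(f)$ with no arithmetic input whatsoever; Baker's theorem enters only in the non-expansive situation (e.g.\ quasihyperbolic automorphisms, or $d=1$ with eigenvalues on the unit circle), which is excluded by hypothesis. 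The genuine gaps are elsewhere. (i) Additivity of the \emph{periodic} entropy along $0 \to M' \to M \to M'' \to 0$ is not a formality: writing $\mathfrak{a}_n=(t_1^n-1,\dots,t_d^n-1)$, the functor $M \mapsto M/\mathfrak{a}_n M$ (whose dual is ${\rm Fix}_{(n\mathbb{Z})^d}$) is only right exact, so one gets $|M/\mathfrak{a}_n M| \leq |M'/\mathfrak{a}_n M'|\cdot|M''/\mathfrak{a}_n M''|$ for free, and the asymptotic reverse inequality --- your ``up to bounded factors'' --- is exactly where expansiveness must be used quantitatively. (ii) Non-principal primes $\mathfrak{p}$ do occur in the filtration and must be shown to contribute zero to all three entropies, including the periodic one; your sketch treats only the hypersurface case $R/(f)$. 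Until (i) and (ii) are supplied, the reduction to the atomic case, and hence the whole argument, is not justified.
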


To describe the $p$-adically expansiveness, we consider equivalent conditions to the expansiveness.

\begin{definition}
If $M$ is a discrete left $\mathbb{Z}[\Gamma]$-module, the $\Gamma$-action on $M$ induces the action on the Pontryagin dual $X \coloneqq \hat{M}$. Conversely, if $X$ is a compact $\Gamma$-module, the $\mathbb{Z}[\Gamma]$-module structure of $M \coloneqq \hat{X}$ is induced. In particular, for $f \in \mathbb{Z}[\Gamma]$ and $M=\mathbb{Z}[\Gamma]/{\mathbb{Z}[\Gamma]f}$, we write $X=X_f \coloneqq \hat{M}$.
\end{definition}

\begin{theorem}\cite[Theorem3.2]{DS07}\label{expansive1}
Let $f \in \mathbb{Z}[\Gamma]$. Then, the $\Gamma$-action on $X_f$ is expansive if and only if $f \in L^1(\Gamma)^{\times}$. Here, $L^1(\Gamma)$ is an algebra given by
\begin{align*}
L^1(\Gamma)=\left\{w=(w_{\gamma})_{\gamma \in \Gamma} \in \prod_{\Gamma} \mathbb{R} \middle | \|w\|_1=\sum_{\gamma \in \Gamma}|w_{\gamma}| < \infty \right\}.
\end{align*}
\end{theorem}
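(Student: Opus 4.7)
The first step is to realize $X_f$ concretely inside $\mathbb{T}^\Gamma$. Pontryagin duality applied to the presentation $\mathbb{Z}[\Gamma] \xrightarrow{\cdot f} \mathbb{Z}[\Gamma] \to M \to 0$ identifies
\[
X_f = \Bigl\{ x \in \mathbb{T}^\Gamma \ \Big|\ \textstyle\sum_\beta f_\beta\, x_{\alpha\beta} = 0 \text{ in } \mathbb{T} \text{ for every } \alpha \in \Gamma \Bigr\},
\]
with the shift action $(\gamma \cdot x)_\alpha = x_{\gamma^{-1}\alpha}$. Since $X_f$ is a compact abelian group, expansiveness is equivalent to the existence of an open neighborhood $U$ of $0$ with $\bigcap_\gamma \gamma U = \{0\}$. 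Taking $U_\epsilon = \{x \in X_f : |x_e| < \epsilon\}$, one computes $\gamma U_\epsilon = \{x : |x_\gamma| < \epsilon\}$, so expansiveness translates cleanly into the condition: \emph{there exists $\epsilon > 0$ such that every $x \in X_f$ with $\sup_\gamma |x_\gamma| < \epsilon$ vanishes.}

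For the direction ($\Leftarrow$), assume $g \in L^1(\Gamma)$ satisfies $gf = 1$, and choose $\epsilon > 0$ with $\epsilon \|f\|_1 < 1/2$. If $x \in X_f$ has $|x_\gamma| < \epsilon$ for all $\gamma$, lift each coordinate to the unique $\tilde{x}_\gamma \in (-1/2, 1/2)$. For every fixed $\alpha$, the sum $\sum_\beta f_\beta \tilde{x}_{\alpha\beta}$ is finite, has absolute value $< 1/2$, and projects to $0 \in \mathbb{T}$, hence equals $0$ in $\mathbb{R}$. Now fix $\delta \in \Gamma$, substitute $\alpha = \delta\alpha'$, multiply by $g_{\alpha'}$, and sum over $\alpha'$: absolute convergence holds because $\tilde{x}$ is bounded and $g \in L^1$, and interchanging summations produces $\sum_\gamma (gf)_\gamma\, \tilde{x}_{\delta\gamma} = \tilde{x}_\delta$ on one side and $0$ on the other. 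Thus $\tilde{x}_\delta = 0$ for every $\delta$, giving $x = 0$.

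For the direction ($\Rightarrow$), one must manufacture an $L^1$-inverse of $f$ from expansiveness alone, which is the main obstacle. The strategy is Banach-algebraic: the argument above, reversed, shows that the expansiveness constant $\epsilon$ yields a quantitative injectivity of right convolution by $f$ on $\ell^\infty(\Gamma, \mathbb{R})$, producing a bounded solution operator on its image. The delicate step is to realize this operator as right convolution by some $g \in L^1(\Gamma)$. In the abelian case one invokes Wiener's theorem: expansiveness forces $\hat{f}$ to be nonvanishing on $\widehat{\Gamma}$, so $1/\hat{f}$ is the Fourier transform of some $g \in L^1(\Gamma)$. For a general, possibly nonamenable $\Gamma$, this promotion is subtle and requires analysis within the Banach $*$-algebra $L^1(\Gamma)$ and its action on $\ell^p$-spaces; this is precisely the content of Deninger and Schmidt's proof, which I would cite rather than reproduce.
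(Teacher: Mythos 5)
The paper offers no proof of this statement at all---it is quoted verbatim from Deninger--Schmidt \cite[Theorem 3.2]{DS07}---so there is no internal argument to compare yours against. On its own terms, your write-up is sound where it is complete: the identification of $X_f$ as the annihilator of $\mathbb{Z}[\Gamma]f$ inside $\mathbb{T}^\Gamma$, the reformulation of expansiveness as ``$\sup_\gamma |x_\gamma|<\epsilon$ forces $x=0$'' (legitimate because the translates of a single basic cylinder neighborhood already exhaust a neighborhood basis), and the $(\Leftarrow)$ direction via lifting to $(-1/2,1/2)$, observing that $\sum_\beta f_\beta \tilde{x}_{\alpha\beta}$ is an integer of modulus $<1/2$, and then convolving with the $L^1$-inverse $g$, are all exactly the standard argument and are correct. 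For $(\Rightarrow)$ you prove nothing and say you would cite \cite{DS07}; since the theorem is an equivalence, that direction is the substantive content, and your description of it is only a road map. Two small cautions there: the Wiener-theorem shortcut you mention genuinely works only when $\Gamma$ is abelian (e.g.\ $\Gamma=\mathbb{Z}^d$, where nonvanishing of $\hat f$ on the dual torus plus Wiener's lemma gives $1/\hat f\in \widehat{L^1}$), and the general case in \cite{DS07} is restricted to residually finite amenable groups and proceeds by showing that failure of invertibility in $L^1(\Gamma)$ produces nonzero bounded points that are not separated by the action---a contrapositive construction rather than a ``bounded solution operator'' that one upgrades to a convolution. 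So: nothing you wrote is wrong, one direction is fully and correctly proved, and the other is deferred to the same citation the paper itself relies on.
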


\begin{theorem}\cite[Theorem3.1]{CL15}\label{expansive2}
Let $X$ be a compact $\Gamma$-module. Then, the $\Gamma$-action on $X$ is expansive if and only if $M=\hat{X}$ is a finitely generated $\mathbb{Z}[\Gamma]$-module satisfying $L^1(\Gamma) \otimes_{\mathbb{Z}[\Gamma]} M=0$.
\end{theorem}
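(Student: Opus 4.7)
The plan is to reduce to the cyclic case (Theorem 3.4) by using a finite presentation of $M$. Once $M$ is known to be finitely generated, fix a presentation $\mathbb{Z}[\Gamma]^s \xrightarrow{A} \mathbb{Z}[\Gamma]^n \to M \to 0$, where $A$ is viewed as an $n \times s$ matrix over $\mathbb{Z}[\Gamma]$. Dually, $X = \hat{M}$ is realized as the closed $\Gamma$-invariant subgroup of $(\mathbb{T}^\Gamma)^n$ cut out by the relations given by $A$, and the hypothesis $L^1(\Gamma) \otimes_{\mathbb{Z}[\Gamma]} M = 0$ is equivalent to surjectivity of the induced bounded map $A \colon L^1(\Gamma)^s \to L^1(\Gamma)^n$.

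For the direction $(\Leftarrow)$, I would apply the open mapping theorem to this surjection to obtain a constant $C > 0$ such that each standard basis vector $e_i \in L^1(\Gamma)^n$ has a pre-image in $L^1(\Gamma)^s$ of $L^1$-norm at most $C$. Choose $\epsilon$ much smaller than $1/C$: for any $x \in X$ whose $\mathbb{T}$-valued coordinates are all $\epsilon$-close to $0$, lift pointwise to $\tilde{x} \in (\mathbb{R}^\Gamma)^n$ with small sup-norm, pair $\tilde{x}$ with the bounded pre-images of the $e_i$, and observe that the result recovers each coordinate of $\tilde{x}$ modulo $\mathbb{Z}$ with error strictly less than $1$, forcing that coordinate to be $0$. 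Expansiveness at the identity then follows, and transports to all of $X$ by $\Gamma$-equivariance of the metric.

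For the direction $(\Rightarrow)$, suppose the $\Gamma$-action on $X$ is expansive. I would first prove $M$ is finitely generated by a compactness-of-characters argument: only finitely many characters are needed to detect pairs of points at distance $\geq \epsilon$, and their $\Gamma$-shifts generate $M$ over $\mathbb{Z}[\Gamma]$. For the $L^1$ vanishing, I would generalize the homoclinic-point construction implicit in Theorem 3.4: expansiveness together with compactness produces, for each generator $m_i$ of $M$, a summable sequence realizing $m_i$ as an $L^1(\Gamma)$-linear combination of relations coming from $A$, which shows $1 \otimes m_i = 0$ in $L^1(\Gamma) \otimes_{\mathbb{Z}[\Gamma]} M$.

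The main obstacle is the quantitative step in $(\Leftarrow)$: converting bare Banach-space surjectivity into an explicit expansive radius requires careful tracking of the interaction between the $\mathbb{T}$-to-$\mathbb{R}$ lifting and the (typically non-commutative) Banach algebra structure of $L^1(\Gamma)$, while ensuring that the mod-$\mathbb{Z}$ error stays strictly below $1$. In the converse, the delicate point is constructing enough homoclinic data to kill every generator of $M$ in the tensor product; for non-cyclic $M$ this demands a genuinely new argument beyond the cyclic case of Theorem 3.4, and it is where I would expect to spend most of the effort.
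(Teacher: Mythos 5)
First, a point of comparison: the paper does not prove this statement at all --- it is imported verbatim from Chung--Li \cite{CL15} and used as a black box --- so your proposal has to stand on its own. On that footing, your direction $(\Leftarrow)$ is essentially right in spirit but rests on an unjustified step: you ``fix a presentation $\mathbb{Z}[\Gamma]^s \xrightarrow{A} \mathbb{Z}[\Gamma]^n \to M \to 0$'' with $s$ finite, yet $\mathbb{Z}[\Gamma]$ is not Noetherian for a general countable group $\Gamma$, so a finitely generated module need not be finitely presented and the relation module $J=\ker(\mathbb{Z}[\Gamma]^n \to M)$ may fail to be finitely generated. The repair is to drop the open mapping theorem entirely: the algebraic statement $L^1(\Gamma)\otimes_{\mathbb{Z}[\Gamma]}M=0$ already says that each standard basis vector $e_i\in L^1(\Gamma)^n$ is a \emph{finite} sum $\sum_k h_{i,k}\,j_{i,k}$ with $h_{i,k}\in L^1(\Gamma)$ and $j_{i,k}\in J$. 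Only those finitely many relations enter the estimate: choose $\epsilon$ below the reciprocal of the largest $\ell^1$-norm among the $j_{i,k}$, conclude that the integer-valued functions $\gamma\mapsto\langle \gamma j_{i,k},\tilde{x}\rangle$ have sup-norm less than $1$ and hence vanish identically, and then recover $\tilde{x}_i(\gamma)=\langle \gamma e_i,\tilde{x}\rangle=0$. With that adjustment your expansiveness argument goes through.

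The genuine gap is in $(\Rightarrow)$. The finite-generation half is fine: your compactness-of-characters sketch is the standard argument (a finite set $F\subset M$ whose dual neighbourhood lies inside an expansive neighbourhood generates $M$, because the annihilator of the submodule it generates is an invariant subgroup contained in every translate of that neighbourhood, hence trivial). But for the $L^1$-vanishing you only describe \emph{what must be produced} --- summable coefficients exhibiting each generator as an $L^1(\Gamma)$-combination of relations --- and you explicitly concede that you do not know how to produce them beyond the cyclic case. That construction is the entire content of the hard direction: in the cyclic case (Theorem 3.4 of the paper, due to Deninger--Schmidt) it proceeds by a duality argument in which expansiveness forces convolution by $f^{*}$ on $\ell^{\infty}(\Gamma,\mathbb{R})$ to be bounded below, so that the adjoint map on $\ell^{1}$ is surjective and $f$ becomes invertible in $L^1(\Gamma)$; Chung--Li's extension to general finitely generated $M$ requires their homoclinic-group machinery and does not follow formally from the cyclic case. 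As written, your proposal establishes one implication and half of the other, and names, but does not prove, the remaining half.
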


In the case $\Gamma=\mathbb{Z}^d$, the following holds.

\begin{theorem}\cite[Corollary3.2]{CL15}\cite[Theorem6.5]{Sc95}\label{expansive3}
Let $X$ be a compact metrizable $\mathbb{Z}^d$-module and assume that $M=\hat{X}$ is a finitely generated $L_d(\mathbb{Z})=\mathbb{Z}[t_1^{\pm 1},\cdots,t_d^{\pm 1}] \simeq \mathbb{Z}[\mathbb{Z}^d]$-module. Then, the following conditions are equivalent:
\begin{enumerate}
\item The $\mathbb{Z}^d$-action on $X$ is expansive.
\item For every $\mathfrak{p} \in {\rm Ass}(M)$, we have $V_{\mathbb{C}}(\mathfrak{p}) \cap T^d = \emptyset$.
\item The module $M$ is $S_{\infty}$-torsion, where $S_{\infty} \subset L_d(\mathbb{Z})$ is the multiplicative system $S_{\infty}=L_d(\mathbb{Z}) \cap L^1(\mathbb{Z}^d)^{\times}$.
\end{enumerate}
Here, $T^d$ is the $d$-torus given by (\ref{torus}) and define
\begin{align*}
{\rm Ass}(M) &=\left\{\mathfrak{p} \in {\rm Spec}L_d(\mathbb{Z}) \middle | \mathfrak{p}={\rm ann}(a) \ \text{for some} \ a \in M  \right\}, 
\end{align*}
and for $\mathfrak{p} \in {\rm Spec}L_d(\mathbb{Z})$ and for $K=\mathbb{C}$ or $\overline{\mathbb{Q}}_p$,
\begin{align*}
V_{K}(\mathfrak{p})&=\left\{ z \in \left(K^{\times}\right)^d \middle | f(z)=0 \ \text{for every} \ f \in \mathfrak{p} \right\}.
\end{align*}
\end{theorem}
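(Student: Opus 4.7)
The plan is to deduce Theorem \ref{expansive3} from the general Theorem \ref{expansive2} by translating the analytic condition $L^1(\mathbb{Z}^d) \otimes_{L_d(\mathbb{Z})} M = 0$ into the algebraic conditions (2) and (3), using Wiener's Tauberian theorem for $\mathbb{Z}^d$ as the bridge between non-vanishing on $T^d$ and invertibility in $L^1(\mathbb{Z}^d)$.

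First consider the equivalence (1) $\Leftrightarrow$ (3). By Theorem \ref{expansive2}, expansiveness is equivalent to $L^1(\mathbb{Z}^d) \otimes_{L_d(\mathbb{Z})} M = 0$, the finite generation hypothesis being already in force. For finitely generated $M$ this vanishing needs to be translated into the statement that every generator is annihilated by an element of $S_\infty = L_d(\mathbb{Z}) \cap L^1(\mathbb{Z}^d)^{\times}$, that is, that $M$ is $S_\infty$-torsion. One direction (namely $S_\infty$-torsion implies $L^1 \otimes M = 0$) is immediate from $S_\infty^{-1} L_d(\mathbb{Z}) \subseteq L^1(\mathbb{Z}^d)$. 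The reverse direction requires producing an element of $S_\infty$ killing a given generator from the existence of an $L^1$-element doing so; this relies on the density of $L_d(\mathbb{Z})$ in $L^1(\mathbb{Z}^d)$ and on the flatness behaviour of $L^1(\mathbb{Z}^d)$ over the localisation $S_\infty^{-1} L_d(\mathbb{Z})$, which is essentially the analytic core already packaged in \cite[Theorem 3.1]{CL15}.

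Next consider (3) $\Leftrightarrow$ (2). For finitely generated $M$, by the standard support-versus-associated-primes correspondence, $M$ is $S_\infty$-torsion iff every $\mathfrak{p} \in \mathrm{Ass}(M)$ meets $S_\infty$. Wiener's Tauberian theorem for $\mathbb{Z}^d$ identifies $L^1(\mathbb{Z}^d)^{\times}$ with those elements whose Fourier transform is nowhere zero on $T^d$; restricted to Laurent polynomials this gives
\begin{align*}
S_\infty = \{f \in L_d(\mathbb{Z}) \mid f(z) \neq 0 \text{ for all } z \in T^d\}.
\end{align*}
Hence (3) is equivalent to: every $\mathfrak{p} \in \mathrm{Ass}(M)$ contains a Laurent polynomial nowhere vanishing on $T^d$. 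This clearly implies $V_\mathbb{C}(\mathfrak{p}) \cap T^d = \emptyset$, giving one direction of (3) $\Leftrightarrow$ (2). For the converse, assume $V_\mathbb{C}(\mathfrak{p}) \cap T^d = \emptyset$; for each $z \in T^d$ pick $f_z \in \mathfrak{p}$ with $f_z(z) \neq 0$, extract a finite subcover $f_{z_1}, \dots, f_{z_N}$ by compactness of $T^d$, and form
\begin{align*}
g(t) = \sum_{i=1}^N f_{z_i}(t)\, f_{z_i}(t^{-1}) \in \mathfrak{p},
\end{align*}
which evaluates to $\sum_i |f_{z_i}(z)|^2 > 0$ on $T^d$, hence lies in $\mathfrak{p} \cap S_\infty$.

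The main obstacle is the first step: rigorously passing between $L^1(\mathbb{Z}^d) \otimes_{L_d(\mathbb{Z})} M = 0$ and the assertion that $M$ is $S_\infty$-torsion. This is where a nontrivial amount of harmonic analysis enters, since $L^1(\mathbb{Z}^d)$ is strictly larger than $S_\infty^{-1} L_d(\mathbb{Z})$, and the reduction relies on the finite generation of $M$ together with flatness properties of the convolution algebra. The second step, by contrast, is a clean interplay between Wiener's lemma, the observation that $\mathfrak{p}$ is stable under multiplication by $L_d(\mathbb{Z})$, and a compactness argument on the torus.
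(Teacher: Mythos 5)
This theorem is quoted in the paper from \cite[Corollary 3.2]{CL15} and \cite[Theorem 6.5]{Sc95}; the paper supplies no proof of its own, so there is no internal argument to compare against. Judged on its own terms, your sketch is half right and half incomplete.

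The part you do in detail, the equivalence of (2) and (3), is correct and is essentially the standard argument (cf.\ \cite[Lemma 6.8]{Sc95}): over the Noetherian ring $L_d(\mathbb{Z})$ a finitely generated module is $S_\infty$-torsion iff every associated prime meets $S_\infty$; Wiener's lemma identifies $S_\infty$ with the Laurent polynomials nowhere vanishing on $T^d$; and given $V_{\mathbb{C}}(\mathfrak{p})\cap T^d=\emptyset$ the element $g(t)=\sum_i f_{z_i}(t)f_{z_i}(t^{-1})$, which equals $\sum_i|f_{z_i}(z)|^2>0$ on $T^d$ because $\overline{z}=z^{-1}$ there and the $f_{z_i}$ have integer coefficients, lies in $\mathfrak{p}\cap S_\infty$. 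That is a complete and clean argument.

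The genuine gap is in your first step, the implication from expansiveness (equivalently, by Theorem \ref{expansive2}, $L^1(\mathbb{Z}^d)\otimes_{L_d(\mathbb{Z})}M=0$ with $M$ finitely generated) to $M$ being $S_\infty$-torsion. You acknowledge that $S_\infty^{-1}L_d(\mathbb{Z})\subseteq L^1(\mathbb{Z}^d)$ gives the easy direction, but for the converse you appeal to ``density'' and to ``the flatness behaviour of $L^1(\mathbb{Z}^d)$ over $S_\infty^{-1}L_d(\mathbb{Z})$.'' Neither is a theorem you can invoke: flatness of the convolution algebra over the localized Laurent ring is precisely the kind of delicate analytic--algebraic statement that would need proof, and your parenthetical that it is ``already packaged in \cite[Theorem 3.1]{CL15}'' is circular, since that result is exactly Theorem \ref{expansive2} and says nothing about $S_\infty$. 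The way the literature actually closes this loop is to prove (1) $\Leftrightarrow$ (2) directly: take a prime filtration $0=M_0\subset M_1\subset\cdots\subset M_n=M$ with $M_j/M_{j-1}\simeq L_d(\mathbb{Z})/\mathfrak{q}_j$, reduce to cyclic modules $L_d(\mathbb{Z})/\mathfrak{p}$, and for a point $z\in V_{\mathbb{C}}(\mathfrak{p})\cap T^d$ explicitly construct a nonzero summable (hence non-expansive) point of the dual system, while conversely a finite tower of expansive actions is expansive. Once (1) $\Leftrightarrow$ (2) is in hand, your own (2) $\Leftrightarrow$ (3) argument finishes the theorem without any flatness claim. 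I would reorganize the proof along those lines rather than trying to pass through $L^1(\mathbb{Z}^d)\otimes M=0$ directly.
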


We will summarize the properties of the $p$-adically expansiveness by considering these theorems.

\begin{definition}
The algebra $c_0(\Gamma)$ is defined by
\begin{align*}
c_0(\Gamma) \coloneqq \left\{\sum_{\gamma \in \Gamma}x_{\gamma}\gamma \in \mathbb{Q}_p[[\Gamma]] \middle| |x_{\gamma}|_p \to 0 \ \text{as} \ \gamma \to \infty \ \ (*) \right\},
\end{align*}
where (*) means that for any $\epsilon >0$ there exists a finite set $S\subset \Gamma$ such that $|x_{\gamma}|_p<\epsilon$ holds for every $\gamma \in \Gamma \setminus S$. For two elements $\sum_{\gamma \in \Gamma}x_{\gamma}\gamma$ and $\sum_{\gamma \in \Gamma}y_{\gamma}\gamma \in c_0(\Gamma)$, the product is given by
\begin{align*}
\sum_{\gamma \in \Gamma}x_{\gamma}\gamma \cdot \sum_{\gamma \in \Gamma}y_{\gamma}\gamma \coloneqq \sum_{\gamma \in \Gamma} \left(\sum_{\delta \in \Gamma}x_{\delta}y_{\delta^{-1}\gamma}\right)\gamma \in c_0(\Gamma).
\end{align*}
The algebra $c_0(\Gamma)$ is equipped with a norm
\begin{align*}
\| \cdot \| : c_0(\Gamma) \rightarrow \mathbb{R}_{\geq 0} \ ; \ \sum_{\gamma \in \Gamma}x_{\gamma}\gamma \mapsto \max_{\gamma \in \Gamma}\{|x_{\gamma}|_p\}.
\end{align*}
\end{definition}

One can check that $(c_0(\Gamma),\| \cdot \|)$ is a $p$-adic Banach algebra over $\mathbb{Q}_p$. See \cite[Section 2]{D09} for the definition of the $p$-adic Banach algebra and details of $c_0(\Gamma)$.
Note that we see $c_0(\Gamma)$ as a $p$-adic analogue of $L^1(\Gamma)$ and we have the correspondence
\begin{align*}
\begin{array}{ccc}
L^1(\Gamma) & \longleftrightarrow & c_0(\Gamma) \\
\rotatebox{90}{$\subset$} & & \rotatebox{90}{$\subset$} \\
\mathbb{C}[\Gamma] & \longleftrightarrow & \mathbb{Q}_p[\Gamma]
\end{array}
\end{align*}
between the archimedian algebra and non-archimedian one. We replace $L^1(\Gamma)$ with $c_0(\Gamma)$ in Theorem \ref{expansive1}, Theorem \ref{expansive2} and Theorem \ref{expansive3} as follow.

\begin{definition}
Let $X$ be an abelian group and $p$ be a prime. Then $X$ is said to have bounded $p$-torsion if there exists an integer $i_0 \geq 0$ such that ${\rm Ker}(p^i : X \rightarrow X)={\rm Ker}(p^{i_0} : X \rightarrow X)$ for all $i \geq i_0$.
\end{definition}

\begin{theorem}\label{thmBr1}\cite[Theorem14]{D12}
Let $f \in \mathbb{Z}[\Gamma]$. The following conditions are equivalent:
\begin{enumerate}
\item The abelian group $X_f=(\mathbb{Z}[\Gamma]/{\mathbb{Z}[\Gamma]f})^\wedge$ has bounded $p$-torsion.
\item There exists an element $g \in c_0(\Gamma)$ such that $gf=1$.
\item For $M_f=\mathbb{Z}[\Gamma]/{\mathbb{Z}[\Gamma]f}$, we have $c_0(\Gamma) \otimes_{\mathbb{Z}[\Gamma]} M_f=0$.
\end{enumerate}
In this case, ${\rm Fix}_N(X_f)$ is finite for any cofinite normal subgroup $N \subset \Gamma$.
\end{theorem}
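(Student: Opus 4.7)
The plan is to handle the easy equivalence (2)$\iff$(3) first, then the two directions of (1)$\iff$(2) separately, and finally the finiteness of $\mathrm{Fix}_N(X_f)$. For (2)$\iff$(3), I would tensor the right-exact sequence $\mathbb{Z}[\Gamma]\xrightarrow{\cdot f}\mathbb{Z}[\Gamma]\to M_f\to 0$ over $\mathbb{Z}[\Gamma]$ with $c_0(\Gamma)$ to obtain the identification $c_0(\Gamma)\otimes_{\mathbb{Z}[\Gamma]}M_f \cong c_0(\Gamma)/c_0(\Gamma)f$. Vanishing of the right-hand side is clearly equivalent to $1\in c_0(\Gamma)f$, which is exactly (2).

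For (2)$\Rightarrow$(1), I would take $g\in c_0(\Gamma)$ with $gf=1$ and choose $n_0\geq 0$ so that $p^{n_0}g$ lies in the unit ball $c_0(\Gamma)^{\circ}=\{x\in c_0(\Gamma):\|x\|\leq 1\}$. Exploiting the $p$-adic density of $\mathbb{Z}[\Gamma]$ in $c_0(\Gamma)^{\circ}$, for any $N\geq n_0$ I can approximate $p^{n_0}g$ by some $\tilde g\in\mathbb{Z}[\Gamma]$ with $p^{n_0}g-\tilde g\in p^{N}c_0(\Gamma)^{\circ}$. The key observation is that $\tilde g f - p^{n_0}$ then lies in $\mathbb{Z}[\Gamma]\cap p^{N}c_0(\Gamma)^{\circ}=p^{N}\mathbb{Z}[\Gamma]$, so $p^{n_0}\in\mathbb{Z}[\Gamma]f+p^{N}\mathbb{Z}[\Gamma]$ for every such $N$; this implies $p^{n_0}M_f\subseteq p^{N}M_f$, and Pontryagin duality converts the resulting stabilization into the bounded $p$-torsion of $X_f$.

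The main obstacle is (1)$\Rightarrow$(2). Here, bounded $p$-torsion of $X_f$ dualizes to the statement that, for some $i_0$, $p^{i_0}M_f=p^{N}M_f$ for all $N\geq i_0$, equivalently $p^{i_0}\in\mathbb{Z}[\Gamma]f+p^{N}\mathbb{Z}[\Gamma]$. Taking $N=i_0+1$, I would write $p^{i_0}=\alpha f+p^{i_0+1}r$ with $\alpha,r\in\mathbb{Z}[\Gamma]$, so that $\alpha f=p^{i_0}(1-pr)$. The element $u=1-pr$ lies in $c_0(\Gamma)^{\circ}$ with $\|1-u\|\leq p^{-1}<1$, and the Neumann series $u^{-1}=\sum_{k\geq 0}(pr)^{k}$ converges in the $p$-adic Banach algebra $c_0(\Gamma)^{\circ}$ because each term has norm at most $p^{-k}$. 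Setting $g:=p^{-i_0}u^{-1}\alpha\in c_0(\Gamma)$ then yields $gf=p^{-i_0}(u^{-1}\alpha f)=p^{-i_0}\cdot p^{i_0}=1$. The appeal of this trick is that it avoids the more delicate Mittag--Leffler / $p$-adic completion argument by converting an approximate inverse into an exact one in a single step.

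Finally, for the finiteness assertion, the cosetwise summation $\pi\colon c_0(\Gamma)\to\mathbb{Q}_{p}[\Gamma/N]$, well-defined because $\Gamma/N$ is finite and the coefficients of an element of $c_0(\Gamma)$ tend to $0$ in $\mathbb{Q}_p$, is a ring homomorphism. Applying it to $gf=1$ shows that $\pi(f)\in\mathbb{Z}[\Gamma/N]$ is a unit in $\mathbb{Q}_{p}[\Gamma/N]$, so the $\mathbb{Z}$-linear determinant of multiplication by $\pi(f)$ on $\mathbb{Z}[\Gamma/N]$ is a nonzero rational integer. This forces $\mathbb{Z}[\Gamma/N]/\mathbb{Z}[\Gamma/N]\pi(f)$ to be a finite abelian group, and Pontryagin duality identifies its dual with $\mathrm{Fix}_N(X_f)$, which is therefore finite.
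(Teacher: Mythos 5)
Your argument is correct, but note that the paper itself offers no proof of this statement: it is quoted verbatim from Deninger \cite[Theorem 14]{D12} (see also Br\"auer \cite{B10}), so the only comparison available is with the source. Your route differs from the usual one in a worthwhile way. The standard proofs identify the unit ball $c_0(\Gamma)^{\circ}$ with the completed group ring $\varprojlim_n (\mathbb{Z}/p^n)[\Gamma]$ and translate bounded $p$-torsion of $X_f$ into invertibility of $f$ there via a Mittag--Leffler-type argument; you replace that with a one-step trick: from $p^{i_0}M_f=p^{i_0+1}M_f$ you extract $\alpha f=p^{i_0}(1-pr)$ with $\alpha,r\in\mathbb{Z}[\Gamma]$, invert $1-pr$ by the Neumann series (which converges since $\|(pr)^k\|\leq p^{-k}$ and $c_0(\Gamma)$ is complete), and set $g=p^{-i_0}(1-pr)^{-1}\alpha$, so that $gf=p^{-i_0}(1-pr)^{-1}p^{i_0}(1-pr)=1$, with the noncommutativity of $\mathbb{Z}[\Gamma]$ causing no harm because $p$ is central. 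The converse direction is also sound: it rests on the density of $\mathbb{Z}[\Gamma]$ in $c_0(\Gamma)^{\circ}$, the identity $\mathbb{Z}[\Gamma]\cap p^{N}c_0(\Gamma)^{\circ}=p^{N}\mathbb{Z}[\Gamma]$, the fact that $M_f$ is cyclic (so $p^{n_0}\bar{1}\in p^{N}M_f$ forces $p^{n_0}M_f\subseteq p^{N}M_f$), and the Pontryagin-duality dictionary $\mathrm{Ker}(p^i\colon X_f\to X_f)\cong (M_f/p^iM_f)^{\wedge}$ --- all of which hold and are worth stating explicitly in a written-up version. The finiteness of $\mathrm{Fix}_N(X_f)$ via the cosetwise-summation homomorphism $\pi\colon c_0(\Gamma)\to\mathbb{Q}_p[\Gamma/N]$ and the identification $\mathrm{Fix}_N(X_f)\cong\bigl(\mathbb{Z}[\Gamma/N]/\mathbb{Z}[\Gamma/N]\pi(f)\bigr)^{\wedge}$ is likewise the expected argument and is complete once you observe that left-invertibility of $\pi(f)$ in the finite-dimensional algebra $\mathbb{Q}_p[\Gamma/N]$ already implies invertibility, hence a nonzero integral determinant for right multiplication by $\pi(f)$.
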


\begin{remark}
The condition 2 in Theorem \ref{thmBr1} implies that $f \in c_0(\Gamma)^{\times}$. In other words, if $fg=1$ for $f,g \in c_0(\Gamma)$, then we have $gf=1$. This answers a question raised by Deninger \cite[Section 3]{D12}. Indeed, for $f=\sum_{\gamma \in \Gamma}x_{\gamma}\gamma,g=\sum_{\gamma \in \Gamma}y_{\gamma}\gamma$, since
\begin{align*}
fg=\sum_{\gamma \in \Gamma}x_{\gamma}\gamma \cdot \sum_{\gamma \in \Gamma}y_{\gamma}\gamma= \sum_{\gamma \in \Gamma} \left(\sum_{\delta \in \Gamma}x_{\delta}y_{\delta^{-1}\gamma}\right)\gamma=1,
\end{align*}
it follows that
\begin{align}\label{conj1}
\begin{cases}
\sum_{\delta \in \Gamma}x_{\delta}y_{\delta^{-1}}=1 & (\gamma=1) \\
\sum_{\delta \in \Gamma}x_{\delta}y_{\delta^{-1}\gamma}=0 & (\gamma \neq 1).
\end{cases}
\end{align}
Now, since
\begin{align*}
gf=\sum_{\gamma \in \Gamma} \left(\sum_{\delta \in \Gamma}y_{\delta}x_{\delta^{-1}\gamma}\right)\gamma =\sum_{\gamma \in \Gamma} \left(\sum_{\delta \in \Gamma}x_{\delta}y_{\gamma\delta^{-1}}\right)\gamma,
\end{align*}
it is enough to show that
\begin{align}\label{conj2}
\begin{cases}
\sum_{\delta \in \Gamma}x_{\delta}y_{\delta^{-1}}=1 & (\gamma=1) \\
\sum_{\delta \in \Gamma}x_{\delta}y_{\gamma\delta^{-1}}=0 & (\gamma \neq 1).
\end{cases}
\end{align}
The first equation in (\ref{conj2}) follows from the first one in (\ref{conj1}). We obtain the second equation in (\ref{conj2}) by substituting $\delta\gamma\delta^{-1} (\neq 1)$ for $\gamma$ in the second one in (\ref{conj1}).
\end{remark}

\begin{definition}\label{p-expansive1}\cite{B10}
The $\Gamma$-action on $X_f$ is $p$-adically expansive if either condition 1-3 in Theorem \ref{thmBr1} is satisfied.
\end{definition}

The following theorem is a $p$-adic analogue of Theorem \ref{expansive3}.

\begin{theorem}\label{thmBr2}\cite[Proposition 4.19, Proposition 4.22]{B10}
Let $X$ be a compact $\mathbb{Z}^d$-module and assume that $M=\hat{X}$ is a finitely generated $L_d(\mathbb{Z})$-module. Then, the following conditions are equivalent:
\begin{enumerate}
\item The abelian group $X$ has bounded $p$-torsion.
\item For every $\mathfrak{p} \in {\rm Ass}(M)$, we have $V_{\overline{\mathbb{Q}}_p}(\mathfrak{p}) \cap T_p^d = \emptyset$.
\item The module $M$ is $S_p$-torsion, where $S_p \subset L_d(\mathbb{Z})$ is the multiplicative system $S_p=L_d(\mathbb{Z}) \cap c_0(\mathbb{Z}^d)^{\times}$.
\end{enumerate}
Here, $T_p^d$ is the $p$-adic torus given by (\ref{p-adic torus}).
\end{theorem}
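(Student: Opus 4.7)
The plan is to deduce the equivalence of the three conditions by establishing the cycle $(3) \Rightarrow (1)$, $(3) \Rightarrow (2)$, $(2) \Rightarrow (3)$, and $(1) \Rightarrow (2)$. The two main tools are the cyclic case Theorem \ref{thmBr1} and a R\"uckert-style Nullstellensatz for the Tate algebra $c_0(\mathbb{Z}^d) \cong \mathbb{Q}_p\langle t_1^{\pm 1}, \ldots, t_d^{\pm 1}\rangle$, whose maximal spectrum may be identified with the Galois quotient of $T_p^d$. A preliminary fact to pin down is that $f \in L_d(\mathbb{Z})$ lies in $S_p$ precisely when $f$ does not vanish on $T_p^d$: one direction holds because units in the $p$-adic Banach algebra $c_0(\mathbb{Z}^d)$ cannot vanish at any maximal ideal, while the other follows from the Nullstellensatz combined with an approximation of Tate-algebra elements by Laurent polynomials in the $\|\cdot\|$-norm.

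The implication $(3) \Rightarrow (1)$ is immediate: if $M$ is $S_p$-torsion and finitely generated, then a single $f \in S_p$ annihilates $M$, so $M$ is a quotient of $(L_d(\mathbb{Z})/fL_d(\mathbb{Z}))^n$ and dually $X$ is a closed subgroup of $X_f^n$; Theorem \ref{thmBr1} gives that $X_f$ has bounded $p$-torsion, hence so does its power and any subgroup. Similarly $(3) \Rightarrow (2)$ is a one-liner: for $\mathfrak{p} = \mathrm{ann}(a) \in \mathrm{Ass}(M)$ pick $f \in S_p$ with $fa = 0$, so $f \in \mathfrak{p}$; since $f$ has no zero on $T_p^d$, neither does any common zero of $\mathfrak{p}$.

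For $(2) \Rightarrow (3)$, I would use a finite prime filtration $0 = M_0 \subset M_1 \subset \cdots \subset M_n = M$ whose successive subquotients $M_i/M_{i-1}$ are isomorphic to $L_d(\mathbb{Z})/\mathfrak{p}_i$. Each $\mathfrak{p}_i$ contains some associated prime of $M$, and by hypothesis the extension $\mathfrak{p}_i \cdot c_0(\mathbb{Z}^d)$ has empty zero locus on the maximal spectrum, hence equals $c_0(\mathbb{Z}^d)$ by the Nullstellensatz. Writing $1 = \sum_j g_j h_j$ with $h_j \in \mathfrak{p}_i$ and $g_j \in c_0(\mathbb{Z}^d)$, and then replacing each $g_j$ by a sufficiently close Laurent polynomial $p_j$, produces an element $\sum_j p_j h_j \in \mathfrak{p}_i \cap S_p$ annihilating $L_d(\mathbb{Z})/\mathfrak{p}_i$; since the class of $S_p$-torsion modules is closed under extensions, $M$ itself is $S_p$-torsion.

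The most delicate implication is $(1) \Rightarrow (2)$, and the main obstacle sits there: having bounded $p$-torsion does not automatically pass to arbitrary continuous quotients of $X$, so one cannot simply restrict along the surjection $X \twoheadrightarrow \widehat{L_d(\mathbb{Z})/\mathfrak{p}}$ dual to $L_d(\mathbb{Z})/\mathfrak{p} \hookrightarrow M$. My plan is to route through the intermediate condition $c_0(\mathbb{Z}^d) \otimes_{L_d(\mathbb{Z})} M = 0$: first upgrade Theorem \ref{thmBr1} from the cyclic to the finitely generated case, showing that (1) is equivalent to $c_0(\mathbb{Z}^d) \otimes_{L_d(\mathbb{Z})} M = 0$ (this is where the Banach-algebra structure of $c_0(\mathbb{Z}^d)$ is used most crucially, corresponding to \cite[Proposition 4.19]{B10}); then use that $c_0(\mathbb{Z}^d)$ is a Jacobson ring with maximal spectrum identified with (Galois orbits in) $T_p^d$ to see that $c_0(\mathbb{Z}^d) \otimes_{L_d(\mathbb{Z})} M = 0$ is equivalent to $V_{\overline{\mathbb{Q}}_p}(\mathrm{ann}(M)) \cap T_p^d = \emptyset$, which is precisely (2) via the standard description of $\mathrm{Supp}(M)$ in terms of associated primes.
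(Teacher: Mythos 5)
The paper itself gives no proof of this statement: it is quoted directly from Br\"auer's dissertation \cite[Propositions 4.19 and 4.22]{B10}, so there is no in-paper argument to measure yours against. Judged on its own terms, your outline follows the route one would expect (prime filtrations, the affinoid Nullstellensatz for $c_0(\mathbb{Z}^d)\cong\mathbb{Q}_p\langle t_1^{\pm1},\dots,t_d^{\pm1}\rangle$, closure of $S_p$-torsion under extensions), and the implications $(3)\Rightarrow(1)$, $(3)\Rightarrow(2)$ and $(2)\Rightarrow(3)$ are essentially sound modulo standard rigid-analytic facts. Two points, however, are genuine problems rather than routine omissions.

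First, your treatment of $(1)\Rightarrow(2)$ rests entirely on the claim that, for a finitely generated $L_d(\mathbb{Z})$-module $M$, bounded $p$-torsion of $X=\hat{M}$ is equivalent to $c_0(\mathbb{Z}^d)\otimes_{L_d(\mathbb{Z})}M=0$. You announce this as an ``upgrade'' of Theorem \ref{thmBr1} from the cyclic case, but that upgrade is precisely the content of \cite[Proposition 4.19]{B10} --- i.e.\ a substantial part of the statement you are supposed to be proving --- and it is not routine: as you yourself observe, bounded $p$-torsion passes to closed subgroups but not to quotients, and a general finitely generated $M$ is a quotient (not a submodule) of a free module, so the dual $X$ sits inside $\widehat{L_d(\mathbb{Z})}^{\,n}$ as the kernel of a dual map rather than arising from a single $f\in S_p$. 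One has to work with a presentation $L_d(\mathbb{Z})^n\xrightarrow{f}L_d(\mathbb{Z})^m\to M\to 0$ and relate the stabilization of $\mathrm{Ker}(p^i\colon X\to X)=(M/p^iM)^{\wedge}$ to invertibility properties of $f$ over the Banach algebra $c_0(\mathbb{Z}^d)$; your sketch leaves all of this to the reader, so the cycle of implications does not actually close. Second, a smaller but real slip in $(2)\Rightarrow(3)$: the element $\sum_j p_jh_j$ you construct lies in $\mathbb{Q}_p[t_1^{\pm1},\dots,t_d^{\pm1}]$, whereas $S_p$ is by definition $L_d(\mathbb{Z})\cap c_0(\mathbb{Z}^d)^{\times}$, so it need not lie in $S_p$ nor in the ideal $\mathfrak{p}_i\subset L_d(\mathbb{Z})$. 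This is repairable --- approximate the $g_j$ by Laurent polynomials with coefficients in $\mathbb{Z}[1/p]$ (dense in $\mathbb{Q}_p$) and multiply by a high power of $p$, which is legitimate because $p\in S_p$ and $\mathfrak{p}_i$ is an ideal of $L_d(\mathbb{Z})$ --- but as written the construction does not produce an element of $\mathfrak{p}_i\cap S_p$.
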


\begin{definition}\cite{B10}
The $\mathbb{Z}^d$-action on $X$ is $p$-adically expansive if either condition 1-3 in Theorem \ref{thmBr2} is satisfied. This is compatible with Definition \ref{p-expansive1}.
\end{definition}

\begin{remark}
Does there exist $p$-adic entropy if the action is $p$-adically expansive? This is not true in general. An example can be found in \cite[Example 7.1]{B10}. However there exists an example of a $p$-adically expansive $\mathbb{Z}^d$-action whose $p$-adic entropy exists \cite[Theorem 1.1]{D09}. It is an open problem to define a better notion of $p$-adic entropy or $p$-adically expansiveness. See \cite{B10} and \cite{D12} for detail.
\end{remark}

\section{$p$-adic entropy of solenoids}

In this section, we will prove Theorem \ref{thmK2}. First, we will explain the dynamical system which we consider. Let $m \in \mathbb{N}$. For a subset $S$ of primes, let $M=\mathbb{Z}\left[1/{S}\right]^m, X=\Sigma_S^m$ ($\Sigma_S=\mathbb{Z}\left[1/{S}\right]^\wedge$ is a solenoid) and fix $A \in {\rm GL}_m\left(\mathbb{Z}\left[1/{S}\right]\right)$. The $\mathbb{Z}$-action on $M$ given by
\begin{align*}
n \cdot x \coloneqq A^nx
\end{align*}
induces the $\mathbb{Z}$-action on $X$. As mentioned in Section 3, $M$ also has the $\mathbb{Z}[\mathbb{Z}]=\mathbb{Z}[t^{\pm 1}]$-module structure defined by
\begin{align*}
f \cdot x \coloneqq f(A)x.
\end{align*}
We will consider a $p$-adic analogue of Lind-Ward's theorem (see (\ref{explicitLW})) by modifying the dynamical system and prove Theorem \ref{thmK2}.

\begin{lemma}\label{lemma4.1}
Let $S$ be a finite set of primes and $A \in M_m \left(\mathbb{Z}\left[1/{S}\right]\right)$. Assume that $\det A \neq 0$. Then we have
\begin{align*}
\left|\mathbb{Z}\left[\frac{1}{S}\right]^m/{A\mathbb{Z}\left[\frac{1}{S}\right]^m}\right|=\left( \prod_{l \in S} |\det A|_l \right) \cdot |\det A|.
\end{align*}
\end{lemma}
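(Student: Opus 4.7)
The plan is to localize at each rational prime outside $S$, apply the standard local index formula, and then combine everything via the product formula for the nonzero rational number $\det A$.

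First, I would set up the primary decomposition. Since $\mathbb{Z}[1/S]$ is a PID (a localization of $\mathbb{Z}$) whose nonzero prime ideals are exactly $(l)$ for primes $l \notin S$, and since the assumption $\det A \neq 0$ forces $\mathbb{Z}[1/S]^m / A\mathbb{Z}[1/S]^m$ to be a finite torsion module, this module decomposes canonically as the direct sum of its $l$-primary components over primes $l \notin S$. This reduces the computation of its order to a product of local orders.

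Next, for each prime $l \notin S$, I would identify the $l$-primary part with a local quotient. Since $l \notin S$, we have $\mathbb{Z}[1/S] \hookrightarrow \mathbb{Z}_l$ and in particular $A \in M_m(\mathbb{Z}_l)$. Tensoring the exact sequence $\mathbb{Z}[1/S]^m \xrightarrow{A} \mathbb{Z}[1/S]^m \to \mathbb{Z}[1/S]^m/A\mathbb{Z}[1/S]^m \to 0$ with the flat $\mathbb{Z}[1/S]$-module $\mathbb{Z}_l$ identifies the $l$-primary component with $\mathbb{Z}_l^m / A\mathbb{Z}_l^m$. The DVR Smith normal form then gives the standard equality
\begin{align*}
|\mathbb{Z}_l^m / A\mathbb{Z}_l^m| = |\det A|_l^{-1}.
\end{align*}

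Finally, I would assemble the global formula. Multiplying the local orders over all primes $l \notin S$ yields
\begin{align*}
|\mathbb{Z}[1/S]^m / A\mathbb{Z}[1/S]^m| = \prod_{\substack{l < \infty \\ l \notin S}} |\det A|_l^{-1},
\end{align*}
and the product formula $|\det A|_\infty \cdot \prod_{l < \infty} |\det A|_l = 1$ applied to $\det A \in \mathbb{Q}^\times$, together with splitting the finite-prime product into $l \in S$ and $l \notin S$, gives
\begin{align*}
\prod_{\substack{l < \infty \\ l \notin S}} |\det A|_l^{-1} = |\det A| \cdot \prod_{l \in S} |\det A|_l,
\end{align*}
which is the claimed identity. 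The only nontrivial ingredient is the DVR index formula; everything else is primary decomposition and the product formula, so I do not anticipate any serious obstacle.
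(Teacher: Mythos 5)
Your proof is correct, but it is organized differently from the paper's. The paper works globally: it applies Smith normal form directly over the PID $\mathbb{Z}\left[1/S\right]$ to write $QAP=\mathrm{diag}(e_1,\dots,e_m)$ with the $e_i$ normalized to be positive integers prime to every $l\in S$, so that the quotient has order $\prod_i e_i$; since $\prod_i e_i$ and $\det A$ differ by a unit of $\mathbb{Z}\left[1/S\right]$, i.e.\ by $\pm\prod_{l\in S}l^{a_l}$, the exponents $a_l$ are then pinned down by comparing $l$-adic valuations, yielding $l^{a_l}=|\det A|_l$. You instead decompose the finite cokernel into its $l$-primary parts for $l\notin S$, compute each local order as $|\det A|_l^{-1}$ via Smith normal form over the DVR $\mathbb{Z}_l$, and glue with the product formula for $\det A\in\mathbb{Q}^{\times}$. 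In effect, the paper's explicit unit bookkeeping is replaced in your argument by the product formula. Both proofs are complete; the paper's is more self-contained and elementary, while yours cleanly isolates the local contributions and would adapt more readily to settings where a global Smith normal form is unavailable (e.g.\ rings of $S$-integers with nontrivial class group), at the mild cost of justifying the primary decomposition and the identification of the $l$-primary part with $\mathbb{Z}_l^m/A\mathbb{Z}_l^m$ (which, as you note, only needs right-exactness of the tensor product).
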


\begin{proof}
Since $\mathbb{Z}\left[1/{S}\right]$ is a principal ideal domain, there exist matrices $P, Q \in \mathrm{GL}_m \left(\mathbb{Z}\left[1/{S}\right]\right)$ such that
\begin{align*}
QAP=\mathrm{diag}(e_1,e_2, \cdots ,e_m).
\end{align*}
We may assume that $e_1,\dots,e_m \in \mathbb{N}$ and $e_i$ is prime to $p$ for all $1\leq i \leq m$ and all $p \in S$. Because
\begin{align*}
\mathbb{Z}\left[\frac{1}{S}\right]^m/{A\mathbb{Z}\left[\frac{1}{S}\right]^m} \simeq \displaystyle \prod_{i=1}^m \mathbb{Z}\left[\frac{1}{S}\right]/{e_i\mathbb{Z}\left[\frac{1}{S}\right]} \simeq \displaystyle \prod_{i=1}^m \mathbb{Z}/{e_i\mathbb{Z}},
\end{align*}
we have
\begin{align*}
\left|\mathbb{Z}\left[\frac{1}{S}\right]^m/{A\mathbb{Z}\left[\frac{1}{S}\right]^m}\right|=\displaystyle \prod_{i=1}^m e_i=\displaystyle \prod_{l \in S} l^{a_l} \cdot |\det A|
\end{align*}
for some $a_l \in \mathbb{Z}$. On the other hand, since $e_i$ is prime to $p$ for all $1\leq i \leq m$ and all $p \in S$, we see that
\begin{align*}
v_l(l^{a_l} \cdot \det A)&=v_l\left(\prod_{l \in S} l^{a_l} \cdot |\det A|\right) \\
                                 &=v_l\left(\prod_{i=1}^m e_i \right)=0,
\end{align*}
where $v_l$ is the $l$-adic valuation. This implies $a_l=-v_l(\det A)$.
\end{proof}

\begin{proof}[Proof of Theorem \ref{thmK2}]
1. We write $M=\mathbb{Z}\left[1/{S}\right]^m$ and $R=\mathbb{Z}[t^{\pm 1}]$. Let $\bm{e}_k$ be the standard basis for $M$. We will show that
\begin{align}\label{proof thm1.5}
M=R\bm{e}_1+ \cdots +R\bm{e}_m,
\end{align}
but it is enough to show that for all $1 \leq k \leq m$ and $\{j_l\}_{l \in S} \subset \prod_{l \in S} \mathbb{Z}_{\geq 0}$, $\left(\prod_{l \in S} l^{-j_l}\right)\bm{e}_k$ is contained in the R.H.S. of (\ref{proof thm1.5}). Put $\chi_A(u)=\det (uI-A)=\sum_{i=0}^m \alpha_iu^i \in \mathbb{Z}\left[1/{S}\right][u]$. Then the assumption is equivalent to the condition that for all $p \in S$ the Newton polygon of $\chi_A$ does not have the segments with slope 0. This means that there exists exactly one index which maximizes $\{|\alpha_m|_p, |\alpha_{m-1}|_p, \dots,|\alpha_0|_p\}$. Denote the index which maximizes $\{|\alpha_m|_p, |\alpha_{m-1}|_p, \dots,|\alpha_0|_p\}$ by $i_0$ and we write $\alpha_{i_0}=p^{-e_{i_0}}\beta_{i_0}$, where $\beta_{i_0} \in \mathbb{Z}\left[1/{S}\right], |\beta_{i_0}|_p=1$ and $e_{i_0} \in \mathbb{Z}_{\geq 0}$. Using the Cayley-Hamilton theorem, we get
\begin{align*}
\displaystyle \sum_{\substack{0 \leq i \leq m \\ i \neq i_0}} \alpha_iA^i+p^{-e_{i_0}}\beta_{i_0}A^{i_0}=O.
\end{align*}
Here, we identify $A^0$ with the identity matrix $I$. Multiplying this by some rationals and $A^{-i_0}$, we obtain
\begin{align*}
p^{-1}\gamma_{i_0}I=-\displaystyle \sum_{i \neq i_0} c_iA^{i-i_0},
\end{align*}
where $\gamma_{i_0}=\beta_{i_0}\displaystyle \prod_{\substack{l \in S \\ l \neq p}} l^{-f_l}, c_i=p^{e_{i_0}-1}\alpha_i\displaystyle \prod_{\substack{l \in S \\ l \neq p}} l^{-f_l}$ for all $i \neq i_0$ and $f_l=\displaystyle\min_{0 \leq i \leq m} \{v_l(\alpha_i), \ 0\}$ for all $l \in S$. We can check that $\gamma_{i_0}, c_i \in \mathbb{Z}$ and $\gamma_{i_0}$ is prime to $p$. Since $\gamma_{i_0}$ is prime to $p$, we can take $x_{i_0}, y_{i_0} \in \mathbb{Z}$ satisfying $\gamma_{i_0}x_{i_0}+py_{i_0}=1$. Thus we get
\begin{align*}
p^{-1}I&=p^{-1}(\gamma_{i_0}x_{i_0}+py_{i_0})I\\
         &=-x_{i_0}\displaystyle\sum_{i \neq i_0}c_iA^{i-i_0} +y_{i_0}I.
\end{align*}
We take $g_p(t)=-x_{i_0}\displaystyle\sum_{i \neq i_0}c_it^{i-i_0} +y_{i_0} \in \mathbb{Z}[t^{\pm 1}]$ and then it follows that
\begin{align*}
\left(\prod_{p \in S} g_p(A)^{j_p}\right)\bm{e}_k=\left(\prod_{l \in S} l^{-j_l}\right)\bm{e}_k
\end{align*}
2. Using Lemma \ref{lemma4.1}, we compute
\begin{align*}
|\mathrm{Fix}_{n\mathbb{Z}}(X)|&=|\mathrm{Ker} {(I-A^n)}| \\
                               &=|(\mathrm{Coker}(I-A^n))^\wedge| \\
                               &=\left|\mathbb{Z}\left[\frac{1}{S}\right]^m/{(I-A^n)\mathbb{Z}\left[\frac{1}{S}\right]^m}\right| \\
                               &=\displaystyle \prod_{l \in S} l^{-v_l(\det (I-A^n))} \cdot \left|\displaystyle\prod_{k=1}^m (1-\lambda_k^n)\right|
\end{align*}
and
\begin{align}
&\frac{1}{n} \log_p \left(\displaystyle \prod_{l \in S} l^{-v_l(\det (I-A^n))} \cdot \left|\displaystyle\prod_{k=1}^m (1-\lambda_k^n)\right| \right) \\
=&-\displaystyle\sum_{\substack{l \in S \\ l \neq p}} \frac{1}{n}v_l(\det (I-A^n)) \log_pl + \displaystyle\sum_{k=1}^m \frac{1}{n} \log_p(1-\lambda_k^n).\label{comK2}
\end{align}
We can check that the second term of (\ref{comK2}) is convergent to $\displaystyle \sum_{|\lambda_k|_p >1} \log_p \lambda_k$ as $n \to \infty$. In the first term of (\ref{comK2}), since 
\begin{align*}
\begin{cases}
v_l(1-\lambda_k^n)=0 & \text{if} \ |\lambda_k|_l<1 \\
v_l(1-\lambda_k^n)=v_l(\lambda_k^n)=n \cdot v_l(\lambda_k) & \text{if} \ |\lambda_k|_l>1,
\end{cases}
\end{align*}
it follows that
\begin{align*}
\text{the first term of (\ref{comK2})} =&-\displaystyle\sum_{\substack{l \in S \\ l \neq p}}\sum_{k=1}^m v_l(1-\lambda_k^n) \frac{1}{n} \log_pl  \\
=&-\displaystyle\sum_{\substack{l \in S \\ l \neq p}}\sum_{|\lambda_k|_l>1} v_l(\lambda_k)  \log_pl \\
=&\displaystyle \sum_{\substack{l \in S \\ l \neq p}} \displaystyle \sum_{|\lambda_k|_l >1} \log_p |\lambda_k|_l.
\end{align*}
\end{proof}

\begin{example}
Let $S=\{2,3\}$ and
$$
A=\left(
\begin{array}{ccc}
\frac{1}{6} & 1 \\
-1            & 0
\end{array}
\right) \in {\rm GL}_2 \left(\mathbb{Z}\left[\frac{1}{6}\right]\right).
$$
The eigenvalues of $A$ are $\lambda_{\pm} = \frac{1 \pm \sqrt{-143}}{12}$. Since $|\lambda_{\pm}|_p=p^{\pm 1}$ for $p=2,3$, $p$-adic entropies are as follow:
\begin{align*}
h_2(A;X)=&\log_2 3+\log_2 \lambda_+ \in \mathbb{C}_2, \\
h_3(A;X)=&\log_3 2+\log_3 \lambda_+ \in \mathbb{C}_3. 
\end{align*}
Note that
\begin{align*}
h(A;X)=\log 2 +\log 3 =\log 6 \in \mathbb{R}.
\end{align*}
\end{example}

\begin{bibdiv}
\begin{biblist}

\bib{BD99}{article}{
   author={Besser, Amnon},
   author={Deninger, Christopher},
   title={$p$-adic Mahler measures},
   journal={J. Reine Angew. Math.},
   volume={517},
   date={1999},
   pages={19--50},
}

\bib{B10}{article}{
   author={Br\"auer, Jonas},
   title={Entropies of algebraic $\mathbb{Z}^d$-actions and K-theory},
   date={2010},
   note={Dissertation, M\"unster},
}

\bib{CL15}{article}{
   author={Chung, Nhan-Phu},
   author={Li, Hanfeng},
   title={Homoclinic groups, IE groups, and expansive algebraic actions},
   journal={Invent. Math.},
   volume={199},
   date={2015},
   number={3},
   pages={805--858},
}

\bib{D09}{article}{
   author={Deninger, Christopher},
   title={$p$-adic entropy and a $p$-adic Fuglede-Kadison determinant},
   conference={
      title={Algebra, arithmetic, and geometry: in honor of Yu. I. Manin.
      Vol. I},
   },
   book={
      series={Progr. Math.},
      volume={269},
      publisher={Birkh\"{a}user Boston, Inc., Boston, MA},
   },
   date={2009},
   pages={423--442},
}

\bib{D12}{article}{
   author={Deninger, Christopher},
   title={Regulators, entropy and infinite determinants},
   conference={
      title={Regulators},
   },
   book={
      series={Contemp. Math.},
      volume={571},
      publisher={Amer. Math. Soc., Providence, RI},
   },
   date={2012},
   pages={117--134},
}

\bib{DS07}{article}{
   author={Deninger, Christopher},
   author={Schmidt, Klaus},
   title={Expansive algebraic actions of discrete residually finite amenable
   groups and their entropy},
   journal={Ergodic Theory Dynam. Systems},
   volume={27},
   date={2007},
   number={3},
   pages={769--786},
}

\bib{E99}{article}{
   author={Einsiedler, Manfred},
   title={A generalisation of Mahler measure and its application in
   algebraic dynamical systems},
   journal={Acta Arith.},
   volume={88},
   date={1999},
   number={1},
   pages={15--29},
}

\bib{LSW90}{article}{
   author={Lind, Douglas},
   author={Schmidt, Klaus},
   author={Ward, Tom},
   title={Mahler measure and entropy for commuting automorphisms of compact
   groups},
   journal={Invent. Math.},
   volume={101},
   date={1990},
   number={3},
   pages={593--629},
}

\bib{LW88}{article}{
   author={Lind, Douglas},
   author={Ward, Tom},
   title={Automorphisms of solenoids and $p$-adic entropy},
   journal={Ergodic Theory Dynam. Systems},
   volume={8},
   date={1988},
   number={3},
   pages={411--419},
}

\bib{Sc95}{book}{
   author={Schmidt, Klaus},
   title={Dynamical systems of algebraic origin},
   series={Progress in Mathematics},
   volume={128},
   publisher={Birkh\"{a}user Verlag, Basel},
   date={1995},
   pages={xviii+310},
}

\end{biblist}
\end{bibdiv}

\vspace{10pt}
\noindent
Mathematical Institute, Graduate School of Science, Tohoku University,\\
6-3 Aramakiaza, Aoba, Sendai, Miyagi 980-8578, Japan.\\
E-mail address: \textbf{yu.katagiri.s3@dc.tohoku.ac.jp}

\end{document}